\documentclass[12pt]{article}
\usepackage{amssymb,latexsym,amsmath}
\usepackage{amsthm}
\usepackage{epsfig}
\topmargin      -15mm \textwidth      160 true mm \textheight 230
true mm \oddsidemargin   5mm \evensidemargin  5mm \marginparwidth
19mm \advance\textheight by \topskip
\date{}

\newcommand{\C}{\mathbb{C}}
\newcommand{\Z}{\mathbb{Z}}
\newcommand{\N}{\mathbb{N}}
\newcommand{\R}{\mathbb{R}}

\renewcommand{\deg}{{\rm deg} \,}


%
%

\newcommand{\nor}[1]{\parallel {#1} \parallel}
\newcommand{\mo}[1]{\left|{#1}\right|}
\newcommand{\vs}{\vspace{.15in}}

\newcommand{\conj}[1]{\left\{ {#1}\right\}}
\newcommand{\dps}[1]{\displaystyle{#1}}

\newcommand{\comp}[1]{\overline{#1}}


\newcommand{\cc}{\mathbb{C}}
\newcommand{\dd}{\mathbb{D}}
\newcommand{\D}{\mathbb{D}}

\def \R{I\kern-2.5ptR}

\def \Z{{Z\kern-5.0ptZ\,}}


\numberwithin{equation}{section}

\newtheorem{theorem}{Theorem}[section]
\newtheorem{lemma}[theorem]{Lemma}
\newtheorem{corollary}[theorem]{Corollary}
\newtheorem{proposition}[theorem]{Proposition}

\newtheorem{Definition}[theorem]{Definition}
\newenvironment{definition}{\begin{Definition}\rm}{\end{Definition}}
\newtheorem{Remark}[theorem]{Remark}
\newenvironment{remark}{\begin{Remark}\rm}{\end{Remark}}
\newtheorem{Example}[theorem]{Example}
\newenvironment{example}{\begin{Example}\rm}{\end{Example}}
\newtheorem{Problem}[theorem]{Problem}

\def\deg{\mbox{{\em deg}}\,}
\def\vect#1{\mbox{\em vec}(#1)}
\def\calR#1{\mathbb C_{#1}(z)}
\newcommand{\epsi}[2]{ \def\tst{#2} \ifx\tst\empty\epsilon_{#1}\else\epsilon_{#2,#1}\fi }

\makeatletter
\renewenvironment{proof}[1][\proofname]{\par
\pushQED{\qed}%
\normalfont \topsep6\p@\@plus6\p@\relax
\trivlist
\item\relax
{\itshape
#1\@addpunct{.}}\hspace\labelsep\ignorespaces
}{%
\popQED\endtrivlist\@endpefalse
}
\makeatother

\begin{document}
\title{On rational functions without Froissart doublets}
\author{Bernhard Beckermann\thanks{
Laboratoire Painlev\'e UMR 8524, UFR Math\'ematiques --
M3, Universit\'e de Lille, F-59655 Villeneuve d'Ascq CEDEX, France. E-mail:
{\tt $\{$bbecker,matos$\}$@math.univ-lille1.fr}. Supported in part by the Labex CEMPI  (ANR-11-LABX-0007-01).}, George Labahn\thanks{Cheriton School of Computer Science, University of Waterloo, Waterloo, Ontario, Canada. E-mail:
{\tt glabahn@uwaterloo.ca}},
and Ana C. Matos$^*$}
\date{}
\maketitle

\begin{abstract}
In this paper we consider the problem of working with rational functions in a numeric environment.  A particular problem when modeling with such functions is the existence of Froissart doublets, where a zero is close to a pole.  We discuss three different parameters which allow one to monitor the absence of Froissart doublets for a given general rational function. These include the euclidean condition number of an underlying Sylvester-type matrix, a parameter for determing coprimeness of two numerical polynomials and bounds on the spherical derivative.  We show that our parameters sharpen those found in a previous paper by two of the authors.
\end{abstract}

{\bf Keywords:} numerical analysis, rational functions, Pad\'e approximation, Froissart doublets, spurious poles, numerical coprimeness\\

{\bf  AMS subject classification:} 41A21, 65F22

\section{Introduction}
Let $\cc [z]$ be the space of polynomials with complex coefficients, $\cc_n[z]$ the subset of polynomials of degree at most $n$, and
$$
\calR{m,n}=\conj{\frac{p}{q} ,\quad  p\in \cc_m [z], q\in \cc_n [z] , ~q \ne 0}
$$
the set of rational functions.
Rational functions have long played an important role in applied mathematics. 
As an example, Pad\'e approximants and rational interpolants are used for approximation, analytic continuation and for determining singularities of a function  \cite{G-M,T13}. Also, Pad\'e approximants of the $z$ transform of noisy signals are employed \cite{Bessis}
for detecting the number of significant signals, their frequencies, damping, phase and amplitude.
Other applications include sparse interpolation \cite{GLL,KY07}, computer algebra \cite{gerhard,driver} and exponential analysis \cite{cuyt,peelman,potts}.

In order to successfully use rational functions for modelling,  one first has to address the subtle question of choosing a priori the degrees $m,~n$. We want to make sure that the rational function $r\in \calR{m,n}$ is nondegenerate, that is, at least one of the numerator or denominator degrees is maximal after removing common factors. In addition, such a rational function should also be sufficiently ``far'' from $\calR{m-1,n-1}$. Indeed, overshooting the degree may produce strange artifacts commonly referred to as spurious poles. By this we mean we might have so-called Froissart doublets \cite{Froissart}, that is, a pair of points, one a pole and the other a zero of $r$, which are close to each other. This of course makes it impossible to approach smooth functions with such rational functions. Another second well-known artifact is a simple pole with small residual, which does not seem to be significant if one wants to evaluate $r$ at points not too close to such a pole.  These issues become particularly significant when computation is done in  a numeric environment where one can only obtain close rather than exact answers.

In a recent paper \cite{GGT12}, the authors introduced the notion of robust Pad\'e approximants, a lower order Pad\'e approximant based on the SVD of the underlying Toeplitz matrix. There the authors showed in many illustrating examples that their robust Pad\'e approximants  no longer have spurious poles. Only later was it shown that the underlying nonlinear Pad\'e map taking the coefficients of the initial Taylor series and mapping them to the coefficients in the basis of monomials of the numerator and denominator of a Pad\'e approximant is forward well-conditioned (but not necessarily backward) at such robust Pad\'e approximants \cite[Theorem~1.2]{BM}, and that robust Pad\'e approximants may have spurious poles \cite{masc,BM}. A first important contribution to this set of questions is \cite{WW} for the continuity of the Pad\'e map.

In computer algebra the area of symbolic/numeric computation often considers correctness and stability issues when working with polynomial arithmetic. For example there has been considerable work on problems such as the numerical gcd of two polynomials having floating point coefficients. However there seems to be very little work which deals with
numerical analysis around rational functions. Indeed even the first issue of clarifying how to measure distances in $\calR{m,n}$ has not really been considered.

It seems natural that one should expect a connection between ``nearly'' degenerate rational functions and numerators and denominators having a non-trivial numerical gcd. This includes the two papers \cite{BL,Cor} where coprimeness parameters are considered and which both make the link with the underlying Sylvester matrix formed by the coefficients of the numerator and denominator (see Definition~\ref{def_Sylvester} in the third section).

The aim of this paper is to discuss three different parameters which allow one to monitor the absence of Froissart doublets for a given general rational function $r\in \calR{m,n}$. 
 \begin{itemize}
    \item The euclidean condition number of underlying Sylvester-type matrices depending on some integer $\ell$;
    \item the coprimeness parameter of \cite{BL,Cor};
    \item bounds on the spherical derivative.
 \end{itemize}
In each case we will show how our first two parameters generalizes and sharpens  the parameters presented in \cite{BM}. In the case of the  spherical derivative, the two new parameters introduced here are essentially best Lipschitz constants of a rational function and we show how measuring distances in $\calR{m,n}$ also partially sharpens some distance measures found in \cite{BM}.

This paper is a follow up to the paper \cite{BM} where some of the same problems were considered. In order to explain our contributions in more detail, we describe results from the previous paper along with our new findings. For this  it is helpful to refer to different properties given in Figure~\ref{figure}.
The authors in \cite{BM} refer to $r=p/q$ with modest euclidean condition number of the underlying Sylvester type matrix for $\ell=1$ as well-conditioned rational functions, and deduced several properties of such functions. This includes, for example, the absence of Froissart doublets \cite[Theorem~1.3(a)]{BM} and of small residuals for simple poles \cite[Theorem~1.3(b)]{BM}, a large distance to the set $\calR{m-1,n-1}$ of degenerate rational functions \cite[Theorem~1.4]{BM}, but also a modest forward and backward condition number for the non-linear Pad\'e map for well-conditioned Pad\'e approximants \cite[Theorem~1.2]{BM}. They also establish the equivalence of two different distances in $\calR{m,n}$, one based on values and the other on coefficients of rational functions \cite[Theorem~4.1]{BM}. This corresponds to the implications on the right of Figure~\ref{figure}. We will establish later in Theorem~\ref{thm_norms}  that the choice of our parameter $\ell$ in Definition~\ref{def_Sylvester} is not essential.

The results in this paper correspond to the left side of Figure~\ref{figure}.
In  \S 2.1 we recall some of the findings from \cite{BM} and also the coprimeness parameter of \cite{BL,Cor}. We show, in Theorem~\ref{thm_Froissart_coprime}, how this coprimeness parameter allows one to monitor the absence of Froissart doublets, and so  generalizes and sharpens the previous attempts found in \cite{BM,BL}.
In \S 2.2  we introduce two new parameters based on the spherical derivative. We show, in Theorem~\ref{thm_spherical}(a),(b) and Corollary~\ref{cor_residuals}, that these parameters are essentially best Lipschitz constants of a rational function, and that these new parameters also allow one to insure the absence of Froissart doublets and poles with small residual.
In addition, in Theorem~\ref{thm_spherical2} we describe some special cases where these findings are sharper than those of Theorem~\ref{thm_Froissart_coprime}. Finally, in \S 2.3 we come back to the question of comparing distances in $\calR{m,n}$.
We show, in Theorem~\ref{thm_distances}, that \cite[Theorem~4.1]{BM} can be partly sharpened in terms of the coprimeness parameter and give an example showing that a second inequality cannot be improved. This completes the picture of Figure~\ref{figure}.

\begin{figure}
\centering{\includegraphics[scale=0.45]{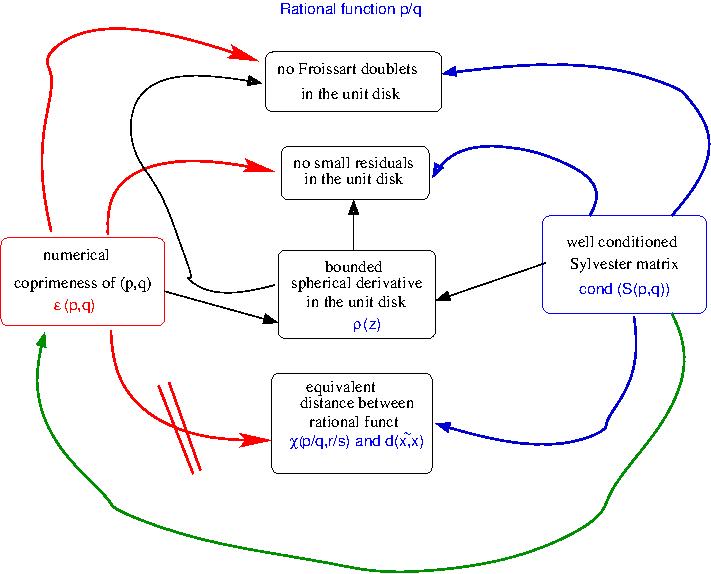}}
\caption{\em Link between the three indicators (Sylvester condition number, coprimeness, bounded spherical derivative) and Froissart doublets of rational functions.}\label{figure}
\end{figure}

The remainder of this paper is as follows.  Statements of our main results involving our three parameters are presented in three subsections in \S 2, with proofs of all our main statements given in \S 3. The paper then ends  with a conclusion and topics for future research in \S 4.

\section{Main results}

In this section we present the main results mentioned in the previous section.   Here all theorems are stated with the proofs  given later in the following section.

\subsection{Measure of coprimeness and Sylvester type matrices}

In what follows we consider fixed integers $m,n\geq 0$. In order to simplify notation, we will not explicitly indicate the dependency on $m,n$ of each object. For a polynomial $c(z)=c_0 + c_1z + \cdots +c_nz^n$ with coefficients $c_j$ we denote by
$\vect{c}=(c_0,c_1,\cdots ,c_n)^T$ its coefficient vector, with the size of this vector being clear from the context.
We start by introducing a so-called Sylvester type matrix $S^{(\ell)}$ associated to a pair of polynomials.

\begin{definition}\label{def_Sylvester}
   Given an integer $\ell\geq 0$ and polynomials $p\in\cc_m [z], q\in\cc_n [z]$, with coefficients $p_j,~q_j$, respectively, the associated $(m+n+\ell) \times (m + n + 2 \ell)$ Sylvester type matrix of $p$ and $q$ is defined by
$$
S^{(\ell)}(p,q)=\left(
\begin{array}{cc}
\underbrace{\begin{array}{cccc}
p_0 &&&\\
p_1 & p_0&&\\
\vdots &p_1&\ddots&\\
p_m &\vdots&\ddots&p_0\\
& p_m & &p_1\\
&&\ddots & \vdots \\
&&&p_m
\end{array} }_{n+\ell}&
\underbrace{\begin{array}{cccc}
q_0 &&&\\
q_1 & q_0&&\\
\vdots &q_1&\ddots&\\
q_n &\vdots&\ddots&p_0\\
& q_n & &q_1\\
&&\ddots & \vdots \\
&&&q_n
\end{array} }_{m+\ell}
\end{array}
\right)\in \cc^{(m+n+\ell)\times (m+n+2\ell)}.
$$
\qed
\end{definition}
\noindent
When $\ell = 0$,   $S^{(\ell)}(p,q)$  reduces to the transpose of the classical Sylvester matrix \cite{Labahn} while when $\ell = 1$ we get the Sylvester type matrix used in  \cite{BM}. The more general  $\ell$ allows us to consider increased degrees in an associated diophantine equation connected to polynomial gcd computation of $p$ and $q$.

It is well-known \cite{Labahn} that the classical square Sylvester matrix $S^{(0)}(p,q)$ is invertible if and only if the polynomials $p$ and $q$ are coprime and the defect $\min (m-\deg(p),n-\deg(q)) $ is equal to zero, that is, the rational function $p/q$ is nondegenerate. More generally, $S^{(\ell)}(p,q)$ has full row rank if and only $p/q$ is nondegenerate. We refer to \cite[Lemma~3.1]{BM} for a proof in the case $\ell=1$, while a proof for $\ell>1$ is similar, based on the relation
\begin{eqnarray} \label{S_powers_z}
    &&
    \Bigl(1,~z,...,~z^{m+n+\ell-1}\Bigr) S^{(\ell)}(p,q)
    \\&& \nonumber=
    \Bigl(p(z),~z^1p(z),~...,~z^{n+\ell-1}p(z),
    ~q(z),~z^1q(z),~...,~z^{m+\ell-1}q(z)\Bigr).
\end{eqnarray}

In order to make the link with the coprimeness parameter discussed by Corless, Gianni, Trager, and Watt in \cite{Cor} we introduce as in \cite{BM} a norm in $\mathbb C[z] \times \mathbb C[z]$ through the formula
$$
      \| (p,q) \|_2 = \sqrt{\| \vect{p} \|_2^2 + \| \vect{q} \|_2^2} ,
$$
and consider the following quantities.
\begin{definition}\label{def_epsi}
For $p\in\cc_m[z]$, $q\in\cc_n[z]$, and a set $K \subset \mathbb C$, consider
\begin{eqnarray*} &&
    \epsi{2}{}(p,q)=\inf \bigl\{ \| (p-\widetilde p,q-\widetilde q) \|_2 :
    (\widetilde p,\widetilde q)\in \mathbb C_m[z]\times \mathbb C_n[z]
    \mbox{~have a common root} \bigr\},
    \\&&
    \epsi{2}{K}(p,q)=\inf_{z\in K}
    \left(\frac{\mo{p(z)}^2}{\sum_{j=0}^m \mo{z}^{2j}}+\frac{\mo{q(z)}^2}{\sum_{j=0}^n \mo{z}^{2j}}\right)^{1/2}.
\end{eqnarray*}
\end{definition}

Therefore the coprimeness parameter $\epsi{2}{}(p,q)$ measures the distance to the set of pairs of polynomials with a non-trivial gcd. At the same time it is mentioned in \cite[Remark 4]{Cor}  that $\epsi{2}{}(p,q)$ coincides with $\epsi{2}{\mathbb C}(p,q)$, the latter quantity being much more accessible since one minimizes only with respect to the single complex parameter $z$.


Since $\| S^{(\ell)}(p,q)\|_2$ is not too far from $\| (p,q)\|_2$, our coprime measure $\epsi{2}{}(p,q)$
approximately also gives the distance of $S^{(\ell)}(p,q)$ to the set of singular Sylvester type matrices $S^{(\ell)}(\widetilde p,\widetilde q)$, that is, a kind of smallest structured singular value \cite{rump}. As such, an estimate of the form
\begin{equation} \label{eq_epsi_inverse}
     \epsi{2}{\mathbb C}(p,q) =\epsi{2}{}(p,q) \geq \frac{1}{\sqrt{m+n+1} \,  \| S^{(\ell)}(p,q)^\dagger\|_2}
\end{equation}
in terms of the norm of the pseudo-inverse is not surprising.
 To see this  we just take norms in \eqref{S_powers_z} (see also  \cite[Lemma~5.1]{BM} for a proof in the case $\ell=1$).
In \cite[\S 6.2]{BM} we conjectured that the dependency on $\ell$ of the right-hand side of \eqref{eq_epsi_inverse} is not important. In the present paper we are able to state:
%
\begin{theorem}\label{thm_norms}
   Let $p\in \mathbb C_m[z],q\in \mathbb C_n[z]$ such that
   $p/q$ is nondegenerate. Then
   \begin{equation}\label{relSl}
      \nor{S^{(0)}(p,q)^{-1}}_2 ~ \leq ~ \nor{S^{(\ell)}(p,q)^{\dagger}}_2 ~ \leq ~ (1+\sqrt{\ell})\nor{S^{(0)} (p,q)^{-1}}_2
   \end{equation}
   for all integers $\ell\geq 0$.
\end{theorem}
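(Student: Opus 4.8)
The plan is to translate \eqref{relSl} into a statement about smallest singular values and prove the two inequalities separately. Since $p/q$ is nondegenerate, $S^{(0)}(p,q)$ is square and invertible while $S^{(\ell)}(p,q)$ has full row rank, so $\nor{S^{(0)}(p,q)^{-1}}_2=1/\sigma_{\min}\bigl(S^{(0)}(p,q)\bigr)$ and $\nor{S^{(\ell)}(p,q)^{\dagger}}_2=1/\sigma_{\min}\bigl(S^{(\ell)}(p,q)\bigr)$, with $\sigma_{\min}$ the smallest nonzero singular value. By \eqref{S_powers_z}, $S^{(\ell)}(p,q)$ realizes, on coefficient vectors, the B\'ezout-type map $(a,b)\mapsto ap+bq$ from $\cc_{n+\ell-1}[z]\times\cc_{m+\ell-1}[z]$ onto $\cc_{m+n+\ell-1}[z]$, whose kernel is $\{(cq,-cp):\ \deg c\le\ell-1\}$ and has dimension exactly $\ell$ by nondegeneracy. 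The case $\ell=0$ being trivial, assume $\ell\ge1$.

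I would obtain the upper bound $\nor{S^{(\ell)}(p,q)^{\dagger}}_2\le(1+\sqrt\ell)\nor{S^{(0)}(p,q)^{-1}}_2$ by exhibiting an explicit right inverse of $S^{(\ell)}(p,q)$ built from $S^{(0)}(p,q)^{-1}$. For $0\le i\le m+n-1$ let $(a^{(i)},b^{(i)})=S^{(0)}(p,q)^{-1}\vect{z^i}$ be the unique low-degree solution of $a^{(i)}p+b^{(i)}q=z^i$ with $\deg a^{(i)}\le n-1$, $\deg b^{(i)}\le m-1$, and for $m+n\le i\le m+n+\ell-1$ put $(a^{(i)},b^{(i)})=\bigl(z^{\,i-m-n+1}a^{(m+n-1)},\,z^{\,i-m-n+1}b^{(m+n-1)}\bigr)$; this still solves $a^{(i)}p+b^{(i)}q=z^i$, and since $\ell-1$ extra degrees are free it still obeys $\deg a^{(i)}\le n+\ell-1$, $\deg b^{(i)}\le m+\ell-1$. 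Stacking the coordinate vectors of the $(a^{(i)},b^{(i)})$ as columns gives $\widetilde X$ with $S^{(\ell)}(p,q)\widetilde X=I$. Splitting $\widetilde X=[\widetilde X_1\mid\widetilde X_2]$ into its first $m+n$ and last $\ell$ columns, $\widetilde X_1$ is $S^{(0)}(p,q)^{-1}$ with zero rows inserted, so $\nor{\widetilde X_1}_2=\nor{S^{(0)}(p,q)^{-1}}_2$, while each column of $\widetilde X_2$ is a zero-padded rearrangement of $S^{(0)}(p,q)^{-1}\vect{z^{m+n-1}}$, so $\nor{\widetilde X_2}_2\le\nor{\widetilde X_2}_F\le\sqrt\ell\,\nor{S^{(0)}(p,q)^{-1}}_2$. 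Since $\nor{S^{(\ell)}(p,q)^{\dagger}}_2\le\nor{X}_2$ for any right inverse $X$ (the Moore--Penrose inverse has its columns in $(\ker S^{(\ell)}(p,q))^\perp$, and any other right inverse adds a matrix with columns in $\ker S^{(\ell)}(p,q)$), the block-column estimate $\nor{\widetilde X}_2^2\le\nor{\widetilde X_1}_2^2+\nor{\widetilde X_2}_2^2$ yields
$$\nor{S^{(\ell)}(p,q)^{\dagger}}_2\ \le\ \nor{\widetilde X}_2\ \le\ \sqrt{1+\ell}\ \nor{S^{(0)}(p,q)^{-1}}_2\ \le\ (1+\sqrt\ell)\,\nor{S^{(0)}(p,q)^{-1}}_2 .$$

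For the lower bound $\nor{S^{(0)}(p,q)^{-1}}_2\le\nor{S^{(\ell)}(p,q)^{\dagger}}_2$, i.e.\ $\sigma_{\min}(S^{(\ell)}(p,q))\le\sigma_{\min}(S^{(0)}(p,q))$, I would exploit that $S^{(0)}(p,q)$ sits inside $S^{(\ell)}(p,q)$: keeping only the rows indexed by $z^0,\dots,z^{m+n-1}$ and the columns carrying $z^jp$ ($0\le j\le n-1$) and $z^jq$ ($0\le j\le m-1$) returns $S^{(0)}(p,q)$ exactly, whence $S^{(0)}(p,q)^{*}S^{(0)}(p,q)$ is a principal submatrix of the Toeplitz-structured Gram matrix $S^{(\ell)}(p,q)^{*}S^{(\ell)}(p,q)$, and one would try to conclude by Cauchy interlacing. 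The delicate point — which I expect to be the main obstacle — is that this passage deletes both $\ell$ rows and $2\ell$ columns, so off-the-shelf interlacing does not pin the smallest singular value in the required direction; moreover $S^{(\ell)}(p,q)^{*}S^{(\ell)}(p,q)$ is rank deficient, its $\ell$ trailing eigenvalues being forced to zero, so a coarse count only returns the vacuous bound $\sigma_{\min}(S^{(0)}(p,q))\ge0$. Making the interlacing effective requires matching the $2\ell$ deleted Gram indices against those $\ell$ forced zeros together with the explicit $\ell$-dimensional kernel computed above, so as to isolate precisely the eigenvalue of $S^{(0)}(p,q)^{*}S^{(0)}(p,q)$ that gets compared with $\sigma_{\min}(S^{(\ell)}(p,q))^2$; this bookkeeping, resting on the precise banded pattern of $S^{(\ell)}(p,q)$, is where the work lies, the upper bound being essentially the construction described above.
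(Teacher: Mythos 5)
Your proof of the second (upper) inequality is complete and is essentially the paper's own argument: the paper builds exactly the same right inverse, whose first $m+n$ columns are the zero-padded columns of $S^{(0)}(p,q)^{-1}$ and whose last $\ell$ columns carry the coefficients of $z^iu,\,z^iv$ for $i=1,\dots,\ell$, where $pu+qv=z^{m+n-1}$ comes from the last column of $S^{(0)}(p,q)^{-1}$; it then uses precisely your observation that $\nor{S^{(\ell)}(p,q)^{\dagger}}_2\le\nor{X}_2$ for any right inverse $X$ (in the paper, via $S^{(\ell)}(p,q)^{\dagger}=(I-YY^{\dagger})X$ with $Y$ spanning the kernel). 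Your Frobenius bookkeeping even yields the marginally sharper constant $\sqrt{1+\ell}$ in place of $1+\sqrt{\ell}$.

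The first (lower) inequality is a genuine gap: you sketch a Cauchy-interlacing strategy and candidly concede you cannot close it. The paper's route is different: it permutes the columns of $S^{(\ell)}(p,q)$ into $S_*=\left(\begin{smallmatrix}S&E\\0&V\end{smallmatrix}\right)$ with $S=S^{(0)}(p,q)$, asserts the block formula $S_*^{\dagger}=\left(\begin{smallmatrix}S^{-1}&-S^{-1}EV^{\dagger}\\0&V^{\dagger}\end{smallmatrix}\right)$, and reads off the inequality because $S^{-1}$ sits inside $S_*^{\dagger}$. Your suspicion that ``this is where the work lies'' is, however, well founded: that block matrix is in general only a right inverse of $S_*$, not the Moore--Penrose inverse ($S_*^{\dagger}S_*$ fails to be Hermitian unless $S^{-1}E(I-V^{\dagger}V)=0$), and the inequality itself can fail. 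For $m=n=\ell=1$, $p=1+z$, $q=1$ one has $S^{(0)}(p,q)=\left(\begin{smallmatrix}1&1\\1&0\end{smallmatrix}\right)$, so $\nor{S^{(0)}(p,q)^{-1}}_2^{-2}=\sigma_{\min}(S^{(0)}(p,q))^2=(3-\sqrt5)/2\approx0.382$, whereas $\sigma_{\min}(S^{(1)}(p,q))^2=\lambda_{\min}\left(\begin{smallmatrix}2&1&0\\1&3&1\\0&1&1\end{smallmatrix}\right)\approx0.468$, giving $\nor{S^{(1)}(p,q)^{\dagger}}_2<\nor{S^{(0)}(p,q)^{-1}}_2$. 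So no refinement of your interlacing bookkeeping can establish the lower bound as stated; only the upper bound --- the direction actually needed for \eqref{eq_epsi_inverse} --- is secure, and that part of your proposal is correct.
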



\begin{remark}\label{rem_norm1}
   The authors in \cite{BL} have obtained more compact expressions by choosing in Definition~\ref{def_epsi} a different norm for pairs of polynomials, namely
   $$
       \| (p,q) \|_1 = \max( \| \vect{p}\|_1, \| \vect{q}\|_1).
   $$
   This allowed them to deduce that $\| S^{(\ell)}(p,q) \|_1 = \| (p,q) \|_1$, independent of $\ell$. In this case, the one-norm equivalent of Definition~\ref{def_epsi} becomes
   \begin{eqnarray*}
         \epsi{1}{}(p,q) & :=&
    \inf \bigl\{ \| (p-\widetilde p,q-\widetilde q) \|_1 :
    (\widetilde p,\widetilde q)\in \mathbb C_m[z]\times \mathbb C_n[z]
    \mbox{~have a common root} \bigr\}
    \\&=& \inf \bigl\{ \| S^{(\ell)}(p,q) - \widetilde S \|_1 :
    \widetilde S
    \mbox{~a Sylvester type matrix of not full row rank} \bigr\}.
   \end{eqnarray*}
   It was also shown in \cite[Theorem~4.1]{BL} that $\epsi{1}{}(p,q)=\epsi{1}{\mathbb C}(p,q)$, where
   {\small
   \begin{eqnarray*}
       \epsi{1}{K}(p,q) & := & \inf_{z\in K}\max \conj{\frac{\mo{p(z)}}{\max\left(1,\mo{z}^m\right)},
\frac{\mo{q(z)}}{\max\left(1,\mo{z}^n\right)}} \nonumber \\
& = & \inf_{z\in K}\frac{\nor{(1,z,\cdots , z^{m+n+\ell-1})S^{(\ell )}(p,q)}_1}{\nor{(1,z,\cdots , z^{m+n+\ell-1})}_1}.
   \end{eqnarray*}
   }
The last relation implies $\epsi{1}{}(p,q)\geq \frac{1}{ \| S^{(0)}(p,q)^{-1}\|_1}$, as mentioned already in \cite[Lemma~2.1]{BL}. \qed
\end{remark}

Let us now turn to the question of existence of a Froissart doublet for a rational function $r=p/q\in \calR{m,n}$, that is, a pair consisting of a zero $z_p$ and a pole $z_q$ of $r$ 
which are close to each other.
In \cite[Theorem~1.3(a)]{BM} it was shown that
\begin{equation} \label{Froissart_BM}
     | z_p - z_q | \geq \frac{1}{3\sqrt{2}(m+n+1)^{3/2} \mbox{cond}(S^{(1)}(p,q))},
\end{equation}
provided that both $z_p$ and $z_q$ are in the closed unit disk $\mathbb D$. Here
$\mbox{cond}(B)=\| B \|_2 \, \| B^\dagger \|_2$ denotes the condition number with respect to the euclidean norm.

It seems reasonable to expect that a sufficiently large $\epsi{s}{}(p,q)$ also implies the absence of Froissart doublets, since in this case $p,~q$ is relatively far from a pair of polynomials having a non-trivial gcd. Let
$$
    \chi(x,y):=\frac{|x-y|}{\sqrt{1+|x|^2}\sqrt{1+|y|^2}}.
$$
be the chordal metric obtained by takng the euclidean distance on the Riemann sphere $\mathbb{S}^2$ which is identified with the extended complex plane $\C \cup \{\infty\}$ through stereographic projection. Then the distance between a pole and a zero of a rational function as measured using the chordal metric is approximated from below by:

%
\begin{theorem}\label{thm_Froissart_coprime}
 Let $K\subset \cc$ and $r=\frac{p}{q} \in \calR{m,n}$. Then for any pair $z_p,z_q\in \mathbb C$ with $p(z_p)=0,q(z_q )=0$ and $s \in \{1,2\}$ we have that
\begin{equation}\label{Froissart_BLM}
   \frac{1}{2}\frac{ \epsi{s}{K}(p,q)} {\max\left(~m\nor{\vect p}_s,~n\nor{\vect q}_s ~\right)}  \leq \chi (z_p,z_q).
\end{equation}
Moreover, if $K\subset\D$ or $1/K\subset \D$ we can replace the maximum in the denominator by a minimum.
\end{theorem}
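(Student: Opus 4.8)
The plan is to bound $\epsi{s}{K}(p,q)$ from above by evaluating the infimum defining it at a single, carefully chosen point, namely $z_q$ or $z_p$. Evaluating at $z=z_q$ annihilates the $q$-contribution, since $q(z_q)=0$, so by Definition~\ref{def_epsi} (for $s=2$) and by the formula recalled in Remark~\ref{rem_norm1} (for $s=1$),
$$
\epsi{2}{K}(p,q)\;\le\;\frac{|p(z_q)|}{\bigl(\sum_{j=0}^m|z_q|^{2j}\bigr)^{1/2}},
\qquad
\epsi{1}{K}(p,q)\;\le\;\frac{|p(z_q)|}{\max(1,|z_q|^m)},
$$
and symmetrically with $(p,m,z_q)$ replaced by $(q,n,z_p)$. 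Thus the whole theorem reduces to a one-polynomial estimate: if $c\in\cc_k[z]$ vanishes at $w$, then
$$
\frac{|c(z)|}{\bigl(\sum_{j=0}^k|z|^{2j}\bigr)^{1/2}}\;\le\;2k\,\nor{\vect c}_2\,\chi(z,w),
\qquad
\frac{|c(z)|}{\max(1,|z|^k)}\;\le\;2k\,\nor{\vect c}_1\,\chi(z,w),
$$
valid whenever $z$ and $w$ lie in a common closed hemisphere $\{|z|\le1\}$ or $\{|z|\ge1\}$ of the Riemann sphere.

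To prove the one-polynomial estimate when $|z|,|w|\le1$, write $c(z)=c(z)-c(w)=(z-w)\sum_{j=1}^kc_j\sum_{i=0}^{j-1}z^iw^{j-1-i}$ and use $|z-w|=\chi(z,w)\sqrt{(1+|z|^2)(1+|w|^2)}\le2\,\chi(z,w)$. For $s=1$ one has $\max(1,|z|^k)=1$, and the inner double sum has modulus at most $\sum_{j=1}^k j\,|c_j|\le k\,\nor{\vect c}_1$, each of its $j$ terms having modulus $\le1$. For $s=2$ one instead writes $c(z)/\bigl(\sum_{j=0}^k|z|^{2j}\bigr)^{1/2}=\langle\vect c,u(z)\rangle$ with $u(\zeta)=(1,\zeta,\dots,\zeta^k)/\bigl(\sum_{j=0}^k|\zeta|^{2j}\bigr)^{1/2}$; since $\langle\vect c,u(w)\rangle=0$, Cauchy--Schwarz gives $|c(z)|/\bigl(\sum_{j=0}^k|z|^{2j}\bigr)^{1/2}\le\nor{\vect c}_2\,\nor{u(z)-u(w)}_2$, and one concludes from $\nor{u(z)-u(w)}_2\le\bigl(\max_{|\zeta|\le1}\nor{u'(\zeta)}\bigr)\,|z-w|\le k\,|z-w|$, the Bernstein-type bound $\max_{|\zeta|\le1}\nor{u'(\zeta)}\le k$ being checked by a direct computation on the normalized Veronese curve $\zeta\mapsto u(\zeta)$. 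The case $|z|,|w|\ge1$ reduces to the previous one by passing to the reversed polynomial $\widetilde c(\zeta)=\zeta^kc(1/\zeta)$ and the points $1/z,1/w$, using $\nor{\vect{\widetilde c}}_s=\nor{\vect c}_s$, $\chi(1/z,1/w)=\chi(z,w)$, and the identity $|c(z)|/\bigl(\sum_{j=0}^k|z|^{2j}\bigr)^{1/2}=|\widetilde c(1/z)|/\bigl(\sum_{j=0}^k|1/z|^{2j}\bigr)^{1/2}$, together with its analogue for the $\max(1,\cdot)$ normalization.

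Assembling the pieces: when $K\subset\D$ (respectively $1/K\subset\D$), both $z_p$ and $z_q$ lie in $\{|z|\le1\}$ (respectively $\{|z|\ge1\}$), so the one-polynomial estimate applies to $(c,k,w,z)=(p,m,z_p,z_q)$ and to $(q,n,z_q,z_p)$ at the same time; retaining the smaller of the two bounds yields $\epsi{s}{K}(p,q)\le2\min(m\nor{\vect p}_s,n\nor{\vect q}_s)\,\chi(z_p,z_q)$, the sharpened form with the minimum. For a general $K$ one does not know which hemisphere contains $z_p$ and $z_q$; evaluating $\epsi{s}{K}$ at whichever of the two points has the larger modulus still keeps that point and the relevant root in a common hemisphere, either directly (both moduli $\le1$) or after the reversal (both $\ge1$), and produces one of the bounds $2m\nor{\vect p}_s\chi$, $2n\nor{\vect q}_s\chi$, hence the stated bound with the maximum. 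The one remaining configuration, in which the moduli of $z_p$ and $z_q$ lie on opposite sides of $1$, is treated on its own: either $\chi(z_p,z_q)$ is not small and the crude bound $\epsi{s}{K}(p,q)\le\max(\nor{\vect p}_s,\nor{\vect q}_s)$ from Cauchy--Schwarz suffices, or $z_p$ and $z_q$ are both close to the unit circle.

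The main obstacle is precisely this interplay with the moduli: because the normalizing weights $\bigl(\sum|z|^{2j}\bigr)^{1/2}$ and $\max(1,|z|^k)$ are not invariant under the rotations of the Riemann sphere, one cannot move $z_p$ and $z_q$ into a common hemisphere, and this is exactly why the refinement with the minimum is available only inside $\D$ or outside $\D$. Handling the straddling case and establishing the Bernstein-type inequality $\max_{|\zeta|\le1}\nor{u'(\zeta)}\le k$ for $s=2$ are the delicate points; the $s=1$ case is entirely elementary.
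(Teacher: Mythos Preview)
Your overall plan---evaluate $\epsi{s}{K}$ at $z_q$ or $z_p$ and reduce to a single-polynomial estimate---is exactly the paper's, and your $s=1$ computation coincides with theirs. But the two steps you yourself flag as ``delicate'' are genuine gaps, and the paper's route avoids both.

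For $s=2$ you invoke a ``Bernstein-type bound'' $\max_{|\zeta|\le1}\|u'(\zeta)\|\le k$ for the normalized Veronese curve $u(\zeta)=(1,\zeta,\dots,\zeta^k)/(\sum_j|\zeta|^{2j})^{1/2}$. This is not a direct check: $u$ is not holomorphic (the normalization depends on $|\zeta|$), so one must bound a real Jacobian, and you give no argument. The paper bypasses this entirely by working directly with the divided-difference expansion. Using only $|w|\le1$, two applications of Cauchy--Schwarz give
\[
|c(z)|\;\le\;|z-w|\;k\,\|\vect c\|_2\Bigl(\sum_{j=0}^{k-1}|z|^{2j}\Bigr)^{1/2},
\]
and then $|z-w|\le\sqrt2\,\chi(z,w)\sqrt{1+|z|^2}$ (again only $|w|\le1$ is used) together with the elementary
\[
\sqrt{(1+|z|^2)\sum_{j=0}^{k-1}|z|^{2j}}\;\le\;\sqrt2\Bigl(\sum_{j=0}^{k}|z|^{2j}\Bigr)^{1/2}
\]
yields $|c(z)|/(\sum_{j=0}^k|z|^{2j})^{1/2}\le 2k\,\|\vect c\|_2\,\chi(z,w)$.

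The key point is that this one-polynomial estimate needs only the \emph{root} $w$ to lie in the closed unit disk, with no restriction on the evaluation point $z$. After the reversal symmetry one may assume $\min(|z_p|,|z_q|)\le1$; whichever of $z_p,z_q$ realizes this minimum plays the role of $w$, and the estimate immediately produces one of the bounds $2m\|\vect p\|_s\,\chi$ or $2n\|\vect q\|_s\,\chi$, hence the $\max$. There is no ``straddling'' case to treat. Your version of the one-polynomial estimate, by contrast, demands both $z$ and $w$ in the same hemisphere, and this unnecessary symmetry is precisely what creates the unresolved configuration $|z_p|<1<|z_q|$. The sentence ``either $\chi(z_p,z_q)$ is not small \dots\ or $z_p$ and $z_q$ are both close to the unit circle'' is a description of the difficulty, not a proof.

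The sharpening to a $\min$ when $K\subset\D$ or $1/K\subset\D$ is the same in both approaches: then both roots lie in the same hemisphere, so the one-polynomial estimate applies with either $(p,z_p,z_q)$ or $(q,z_q,z_p)$, and one keeps the smaller of the two resulting upper bounds.
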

Theorem \ref{thm_Froissart_coprime} thus implies that $p$ and $q$ numerically relatively prime (that is, having a large $\epsi{s}{K}(p,q)$) then implies that $r = p/q$ cannot have any Froissart doublets.
Note that,  combined with the estimate \eqref{eq_epsi_inverse}, we have  that the inequality in Theorem~\ref{thm_Froissart_coprime} is sharper than \eqref{Froissart_BM}. Special cases of Theorem~\ref{thm_Froissart_coprime} have been claimed without proof in \cite[\S 4]{BL} for $s=1$ and $K=\mathbb C$, and
established in \cite[Lemma~6.1]{BM} for $s=2$ and $K=\mathbb D$.

It is interesting to note  that the indicators used in \eqref{Froissart_BM} and \eqref{Froissart_BLM} are not sensitive with respect to a small perturbation of the numerator and denominator. Here we can state the following.

\begin{theorem}\label{thm_sensitivity}
    Let $K\subset \cc$ and $\frac{p}{q},\frac{\widetilde p}{\widetilde q} \in \calR{m,n}$.
    \\ {\bf (a)}
    If $\frac{p}{q}$ is nondegenerate and
    $$
               {\| (p-\widetilde p,q-\widetilde q)\|_2} \leq
               \frac{1}{3 \sqrt{m+n+1} \, \| S^{(1)}(p,q)^\dagger \|_2}
   $$
   then $\frac{1}{2} \leq \mbox{cond}(S^{(1)}(\widetilde p,\widetilde q))/\mbox{cond}(S^{(1)}(p,q)) \leq 2$.
    \\ {\bf (b)}
    Let $s\in \{ 1,2\}$. If
    $$
       \| (p-\widetilde p,q-\widetilde q)\|_s \leq \frac{1}{2}
       \epsi{s}{K}(p,q)
    $$
    then
    $\frac{1}{2} \leq \epsi{s}{K}(\widetilde p,\widetilde q)/\epsi{s}{K}(p,q) \leq 3/2$.
\end{theorem}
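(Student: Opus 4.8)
The plan is to observe that, for each fixed $z\in\C$, the quantity under the infimum defining $\epsi{s}{K}(p,q)$ is a seminorm in the pair $(p,q)$ that is dominated by $\|\cdot\|_s$. Denote this quantity by $\nu_z(p,q)$, so that $\nu_z(p,q)^2=\frac{|p(z)|^2}{\sum_{j=0}^m|z|^{2j}}+\frac{|q(z)|^2}{\sum_{j=0}^n|z|^{2j}}$ when $s=2$, and $\nu_z(p,q)=\max\bigl(\frac{|p(z)|}{\max(1,|z|^m)},\frac{|q(z)|}{\max(1,|z|^n)}\bigr)$ when $s=1$. Since $c\mapsto c(z)$ is linear on $\C[z]$, each $\nu_z$ is a seminorm on $\C_m[z]\times\C_n[z]$; and Cauchy--Schwarz (for $s=2$) gives $|p(z)|\le\|\vect p\|_2(\sum_{j=0}^m|z|^{2j})^{1/2}$, while, for $s=1$, the triangle inequality gives $|p(z)|\le\|\vect p\|_1\max(1,|z|^m)$, and likewise for $q$, so that $\nu_z(p,q)\le\|(p,q)\|_s$ for every $z$. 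Consequently, for each $z\in K$ one has $\nu_z(\widetilde p,\widetilde q)\ge\nu_z(p,q)-\nu_z(p-\widetilde p,q-\widetilde q)\ge\nu_z(p,q)-\|(p-\widetilde p,q-\widetilde q)\|_s$; taking the infimum over $z\in K$ and using the hypothesis yields $\epsi{s}{K}(\widetilde p,\widetilde q)\ge\epsi{s}{K}(p,q)-\tfrac12\epsi{s}{K}(p,q)=\tfrac12\epsi{s}{K}(p,q)$, and exchanging the roles of $(p,q)$ and $(\widetilde p,\widetilde q)$ gives $\epsi{s}{K}(\widetilde p,\widetilde q)\le\epsi{s}{K}(p,q)+\tfrac12\epsi{s}{K}(p,q)=\tfrac32\epsi{s}{K}(p,q)$, which is the assertion (with the ratio understood when $\epsi{s}{K}(p,q)>0$; otherwise the hypothesis forces $\widetilde p=p$, $\widetilde q=q$).

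\textbf{Part (a), reduction.} I would use that $(p,q)\mapsto S^{(1)}(p,q)$ is linear, so with $B:=S^{(1)}(p,q)$ and $E:=S^{(1)}(\widetilde p-p,\widetilde q-q)$ we have $S^{(1)}(\widetilde p,\widetilde q)=B+E$. Since $S^{(1)}(a,b)$ consists of $n+1$ columns that are shifts of $\vect a$ and $m+1$ columns that are shifts of $\vect b$, we get
\[
  \|E\|_2^2\le\|E\|_F^2=(n+1)\|\vect{p-\widetilde p}\|_2^2+(m+1)\|\vect{q-\widetilde q}\|_2^2\le(m+n+1)\,\|(p-\widetilde p,q-\widetilde q)\|_2^2,
\]
hence $\|E\|_2\le\sqrt{m+n+1}\,\|(p-\widetilde p,q-\widetilde q)\|_2$, and the hypothesis gives $\|E\|_2\,\|B^\dagger\|_2\le\tfrac13$. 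In particular $\|E\|_2<\sigma_{\min}(B)$, where $\sigma_{\min}(B)=1/\|B^\dagger\|_2$ is positive because $p/q$ is nondegenerate (so $B$ has full row rank).

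\textbf{Part (a), conclusion.} Weyl's perturbation inequality for singular values gives $\sigma_{\min}(B+E)\ge\sigma_{\min}(B)-\|E\|_2>0$, so $S^{(1)}(\widetilde p,\widetilde q)=B+E$ also has full row rank and $\mbox{cond}(S^{(1)}(\widetilde p,\widetilde q))$ is well defined; combining $\|B\|_2-\|E\|_2\le\|B+E\|_2\le\|B\|_2+\|E\|_2$ with $\frac{\|B^\dagger\|_2}{1+\|E\|_2\|B^\dagger\|_2}\le\|(B+E)^\dagger\|_2\le\frac{\|B^\dagger\|_2}{1-\|E\|_2\|B^\dagger\|_2}$ and writing $t:=\|E\|_2\|B^\dagger\|_2\le\tfrac13$, one obtains
\[
  \frac{\mbox{cond}(B)-t}{1+t}\ \le\ \mbox{cond}\bigl(S^{(1)}(\widetilde p,\widetilde q)\bigr)\ \le\ \frac{\mbox{cond}(B)+t}{1-t}.
\]
As the left-hand side decreases and the right-hand side increases in $t\in[0,1)$ and $t\le\tfrac13$, these bounds are at least $\tfrac34\bigl(\mbox{cond}(B)-\tfrac13\bigr)$ and at most $\tfrac32\bigl(\mbox{cond}(B)+\tfrac13\bigr)$ respectively; since $\mbox{cond}(B)\ge1$ the former is $\ge\tfrac12\mbox{cond}(B)$ and the latter is $\le2\,\mbox{cond}(B)$, proving (a).

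\textbf{Main obstacle.} There is no real difficulty of principle here: part (b) is immediate once the seminorm structure is recognized, and part (a) is a routine application of singular value perturbation theory. The only delicate point is tracking constants so that the specific factors in the hypotheses ($3\sqrt{m+n+1}$ in (a), $\tfrac12$ in (b)) propagate to exactly the claimed ratios ($2$ and $3/2$); in part (a) this closes precisely because $\mbox{cond}(B)\ge1$.
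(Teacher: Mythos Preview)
Your proof is correct. Part (b) is essentially identical to the paper's argument: both show the pointwise inequality $\bigl|\epsi{s}{\{z\}}(\widetilde p,\widetilde q)-\epsi{s}{\{z\}}(p,q)\bigr|\le\|(p-\widetilde p,q-\widetilde q)\|_s$ (you phrase it via the seminorm $\nu_z$, the paper writes out the triangle inequality explicitly) and then take the infimum over $z\in K$.

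For part (a) the approaches differ slightly. The paper sets $E:=S^{(1)}(p,q)^{\dagger}\bigl(S^{(1)}(p,q)-S^{(1)}(\widetilde p,\widetilde q)\bigr)$, factors $S^{(1)}(\widetilde p,\widetilde q)=S^{(1)}(p,q)(I-E)$, and uses that $(I-E)^{-1}S^{(1)}(p,q)^{\dagger}$ is a right inverse of $S^{(1)}(\widetilde p,\widetilde q)$ together with the minimality of the pseudoinverse norm among right inverses; this gives the upper bound $\frac{1+\|E\|_2}{1-\|E\|_2}\le 2$, and the lower bound is then claimed ``by symmetry''. You instead take $E$ to be the additive perturbation $S^{(1)}(\widetilde p-p,\widetilde q-q)$ and apply Weyl's inequality for singular values directly, which yields two-sided bounds $\frac{\mathrm{cond}(B)-t}{1+t}\le\mathrm{cond}(B+E)\le\frac{\mathrm{cond}(B)+t}{1-t}$ in one stroke. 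Your route is a bit more self-contained: the lower bound falls out automatically, whereas the paper's ``symmetry'' step implicitly requires observing that $(I-E)\,S^{(1)}(\widetilde p,\widetilde q)^{\dagger}$ is a right inverse of $S^{(1)}(p,q)$ (so that the same factor $\frac{1+\|E\|_2}{1-\|E\|_2}$ bounds the reverse ratio as well). Both arguments close with the same constants thanks to $\mathrm{cond}(B)\ge 1$.
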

Notice that, according to \eqref{eq_epsi_inverse}, the neighborhood in part (b) for $s=2$ is larger than the neighborhood in part (a).

We are now able to show that inequalities \eqref{Froissart_BM} and \eqref{Froissart_BLM} are robust in the sense that they remain valid up to some modest constant if $z_p$ and $z_q$ are roots not of $(p,q)$ but of some perturbed $(\widetilde p,\widetilde q)\in \mathbb C_m[z]\times \mathbb C_n[z]$ sufficiently close to $(p,q)$. For \eqref{Froissart_BM} this has been done before in \cite[Theorem~1.3(a)]{BM}, and we essentially repeat their arguments. For \eqref{Froissart_BLM}, it is convenient to write first the slightly weaker inequality
$$
   \chi (z_p,z_q)  \geq \frac{ \epsi{s}{K}(p,q)} {2(m+n) \, \| (p,q) \|_s}  ,
$$
and to observe that $\epsi{s}{K}(p,q)\leq \| (p,q) \|_s$. Then Theorem~\ref{thm_sensitivity}
yields the following.

\begin{corollary}
    Let $K\subset \cc$ and $\frac{p}{q},\frac{\widetilde p}{\widetilde q} \in \calR{m,n}$. Furthermore, let $z_p,z_q\in \mathbb C$ with $\widetilde p(z_p )=\widetilde q(z_q )=0$. Then, under the assumptions of  Theorem~\ref{thm_sensitivity}(a),
    $$
          | z_p - z_q | \geq \frac{1}{6\sqrt{2}(m+n+1)^{3/2} \mbox{cond}(S^{(1)}(p,q))},
    $$
    Furthermore, under the assumptions of Theorem~\ref{thm_sensitivity}(b),
    $$
   \chi (z_p,z_q)  \geq
   \frac{ \epsi{s}{K}(p,q)} {6(m+n) \, \| (p,q) \|_s} .
    $$
\end{corollary}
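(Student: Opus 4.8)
The plan is to transport the two unperturbed estimates — \eqref{Froissart_BM} for part~(a) and Theorem~\ref{thm_Froissart_coprime} for part~(b) — to the \emph{perturbed} pair $(\widetilde p,\widetilde q)$, whose roots are $z_p,z_q$ by hypothesis, and then to re-express the quantities $\mbox{cond}(S^{(1)}(\widetilde p,\widetilde q))$, $\epsi{s}{K}(\widetilde p,\widetilde q)$ and $\|(\widetilde p,\widetilde q)\|_s$ in terms of the unperturbed ones using Theorem~\ref{thm_sensitivity} and the triangle inequality.

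For part~(a) I would begin with Theorem~\ref{thm_sensitivity}(a), which in particular gives $\mbox{cond}(S^{(1)}(\widetilde p,\widetilde q))\le 2\,\mbox{cond}(S^{(1)}(p,q))<\infty$; hence $S^{(1)}(\widetilde p,\widetilde q)$ has full row rank, $\widetilde p/\widetilde q$ is nondegenerate, and therefore $z_p\ne z_q$ with $z_p$ a genuine zero and $z_q$ a genuine pole of $\widetilde p/\widetilde q$ (a coincidence or a shared root would make $S^{(1)}(\widetilde p,\widetilde q)$ rank deficient). Keeping the standing restriction $z_p,z_q\in\mathbb D$ of \eqref{Froissart_BM}, I apply that inequality to $\widetilde p/\widetilde q$ to get $|z_p-z_q|\ge\bigl(3\sqrt2\,(m+n+1)^{3/2}\,\mbox{cond}(S^{(1)}(\widetilde p,\widetilde q))\bigr)^{-1}$, and then insert the factor $2$ to reach the claimed bound with constant $6\sqrt2$.

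For part~(b) I would first record the weaker form of Theorem~\ref{thm_Froissart_coprime}: since $\|\vect p\|_s,\|\vect q\|_s\le\|(p,q)\|_s$ for $s\in\{1,2\}$, the maximum in the denominator of \eqref{Froissart_BLM} is at most $(m+n)\|(p,q)\|_s$, so $\chi(z_p,z_q)\ge \epsi{s}{K}(p,q)/\bigl(2(m+n)\|(p,q)\|_s\bigr)$ whenever $p(z_p)=q(z_q)=0$. Applying this to $(\widetilde p,\widetilde q)$ (no nondegeneracy is needed here, and $\widetilde q\ne 0$ since $\widetilde p/\widetilde q\in\calR{m,n}$) gives $\chi(z_p,z_q)\ge \epsi{s}{K}(\widetilde p,\widetilde q)/\bigl(2(m+n)\|(\widetilde p,\widetilde q)\|_s\bigr)$. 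Then Theorem~\ref{thm_sensitivity}(b) bounds the numerator below by $\tfrac12\epsi{s}{K}(p,q)$, while the triangle inequality together with the elementary bound $\epsi{s}{K}(p,q)\le\|(p,q)\|_s$ bounds the denominator above by $2(m+n)\bigl(\|(p,q)\|_s+\tfrac12\epsi{s}{K}(p,q)\bigr)\le 3(m+n)\|(p,q)\|_s$. Combining the three estimates yields the constant $6(m+n)$.

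Everything here is bookkeeping of constants; the one point that needs genuine care is checking that the hypotheses of the base results survive the perturbation. For part~(a) this means confirming that $\widetilde p/\widetilde q$ is still nondegenerate — so that $z_p,z_q$ form an honest zero/pole pair and $\mbox{cond}(S^{(1)}(\widetilde p,\widetilde q))$ is finite — which is exactly what the finiteness hidden in the conclusion of Theorem~\ref{thm_sensitivity}(a) provides, together with retaining the disk restriction $z_p,z_q\in\mathbb D$ inherited from \eqref{Froissart_BM}; for part~(b) no such issue arises, since Theorem~\ref{thm_Froissart_coprime} assumes no nondegeneracy. I do not anticipate any obstacle beyond carrying the factors $2$, $\tfrac12$, $\tfrac32$ through the chain.
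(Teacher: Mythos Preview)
Your proposal is correct and follows essentially the same approach as the paper: apply the unperturbed bounds \eqref{Froissart_BM} and the weakened form of Theorem~\ref{thm_Froissart_coprime} to $(\widetilde p,\widetilde q)$, then convert back to $(p,q)$ via Theorem~\ref{thm_sensitivity} and the triangle inequality together with $\epsi{s}{K}(p,q)\le\|(p,q)\|_s$. Your explicit check that $\widetilde p/\widetilde q$ is nondegenerate in part~(a), and your remark that the disk restriction $z_p,z_q\in\mathbb D$ must be inherited from \eqref{Froissart_BM}, are details the paper leaves implicit.
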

Again, using \eqref{eq_epsi_inverse} one may show that the second statement for $s=2$ implies the first one.

\subsection{Froissart doublets, small residuals and spherical derivatives}

In this subsection we will introduce a new parameter in order to monitor the existence of Froissart doublets. Recall that the spherical derivative of a rational function $r\in \calR{m,n}$ is given by
\begin{equation} \label{def_spherical}
   \rho(r)(z) = \frac{|r'(z)|}{1+|r(z)|^2}
\end{equation}
while for any $K \subset \mathbb C$ we set
\begin{equation}\label{def_sphericalK}
\rho_K(r):=\sup_{z\in K} \rho(r)(z).
\end{equation}
Note that $\frac{1}{\rho(r)(z_q)}$ equals the modulus of the residual of a simple pole $z_q$. Hence the following statement,  complementing \cite[Theorem~1.3(b)]{BM},  is immediate.
\begin{corollary}\label{cor_residuals}
   Let $\beta$ be the residual of a simple pole $z_q$ of $r$ in $K$. Then
   $|\beta|\geq \frac{1}{\rho_K(r)}$.
\end{corollary}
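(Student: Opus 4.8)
The plan is to obtain the corollary directly from the observation, recalled just before the statement, that $1/\rho(r)(z_q)$ equals the modulus of the residual of a simple pole $z_q$ of $r$. First I would make that observation precise. The spherical derivative is invariant under the inversion $w\mapsto 1/w$, which is an isometry of the Riemann sphere $\mathbb{S}^2$; hence $\rho(r)(z)=\rho(1/r)(z)$ for every $z$, and in particular the definition \eqref{def_spherical}, which is not literally applicable at a pole of $r$, extends continuously to a pole $z_q$ by setting $\rho(r)(z_q):=\rho(1/r)(z_q)$. Near a simple pole with residual $\beta$ we can write $r(z)=\frac{\beta}{z-z_q}+g(z)$ with $g$ holomorphic at $z_q$, so $1/r$ is holomorphic at $z_q$ with $(1/r)(z_q)=0$ and $(1/r)'(z_q)=1/\beta$. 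Substituting into \eqref{def_spherical} applied to $1/r$ gives $\rho(r)(z_q)=\rho(1/r)(z_q)=\frac{|1/\beta|}{1+0}=\frac{1}{|\beta|}$.

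The conclusion is then immediate: since $z_q\in K$, the definition \eqref{def_sphericalK} yields $\rho_K(r)=\sup_{z\in K}\rho(r)(z)\geq \rho(r)(z_q)=1/|\beta|$, which is precisely $|\beta|\geq 1/\rho_K(r)$.

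The only delicate point — really a matter of bookkeeping rather than an obstacle — is the justification that the spherical derivative at a pole is to be computed via $1/r$. This rests on the invariance of $\rho$ under the inversion $w\mapsto 1/w$, a standard fact, and it can alternatively be verified by the limit computation $\rho(r)(z)=\frac{|r'(z)|}{1+|r(z)|^2}\longrightarrow \frac{|\beta|/|z-z_q|^2}{|\beta|^2/|z-z_q|^2}=\frac{1}{|\beta|}$ as $z\to z_q$, using $|r'(z)|\sim |\beta|/|z-z_q|^2$ and $1+|r(z)|^2\sim |\beta|^2/|z-z_q|^2$. If one prefers to avoid the value of $\rho$ at $z_q$ altogether, the same estimate shows that $\rho_K(r)$ is at least the $\limsup$ of $\rho(r)(z)$ as $z\to z_q$ within $K$, which equals $1/|\beta|$, giving the claim again.
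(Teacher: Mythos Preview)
Your argument is correct and follows exactly the approach indicated in the paper: the text immediately preceding the corollary notes that $1/\rho(r)(z_q)$ equals the modulus of the residual of a simple pole $z_q$ and declares the statement ``immediate,'' and you have simply supplied the details of that observation (the extension of $\rho(r)$ to a pole via $\rho(1/r)$ or equivalently via the limit computation) before taking the supremum over $K$.
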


We show below that the quantity $\rho_K(r)$ is the best Lipschitz constant for $r$ in $K$.
As such, for a reliable evaluation of $r(z)$ for $z\in K$, it seems to be reasonable to restrict ourselves to rational functions $r$ with modest $\rho_K(r)$.
On the other hand, if we want to measure the distance of arguments in terms of the chordal metric, another indicator is more appropriate, namely
\begin{equation} \label{def_spherical2}
   \nu(r)(z) = (1+|z|^2)\frac{|r'(z)|}{1+|r(z)|^2}
    \quad ~~
   \mbox{ and } ~~\nu_K(r):=\sup_{z\in K} \nu(r)(z).
\end{equation}

Let us now turn to Froissart doublets and compare our new indicators with those given previously.

\begin{theorem}\label{thm_spherical}
   Let $K\subset \cc$ and $r= \frac{p}{q}\in \calR{m,n}$ with $p$ and $q$ coprime and  $z_p,z_q\in \mathbb C$ with $p(z_p )=q(z_q )=0$.
   \begin{itemize}
   \item[{\bf (a)}]
   If $K$ is convex then
   \begin{equation} \label{Lipschitz}
       \rho_K(r)=\sup_{z_1,z_2\in K} \frac{\chi(r(z_1),r(z_2))}{|z_1-z_2|} .
   \end{equation}
   In particular, $|z_p-z_q| \geq \frac{1}{\rho_K(r)}$.
   \item[{\bf (b)}]
   If $K$ is spherically convex\footnote{This means that, with $z_1,z_2\in K$, also the preimage of the shortest path from $z_1$ to $z_2$ on the Riemann sphere belongs to $K$. Notice that disks and half-planes are spherically convex.} then
   \begin{equation} \label{Lipschitz2}
       \nu_K(r) \leq \sup_{z_1,z_2\in K} \frac{\chi(r(z_1),r(z_2))}{\chi(z_1,z_2)} \leq \frac{\pi}{2} \nu_K(r) .
   \end{equation}
   In particular, $\chi(z_p,z_q) \geq \frac{2}{\pi \nu_K(r)}$.
   \end{itemize}
\end{theorem}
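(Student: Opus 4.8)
The plan is to read both $\rho_K(r)$ and $\nu_K(r)$ as \emph{infinitesimal Lipschitz constants} and to pass from the infinitesimal to the global estimate by integrating along a suitable curve in $K$. Throughout I identify $\widehat{\mathbb C}$ with the sphere of diameter one in $\mathbb R^3$ via stereographic projection, so that $\chi(a,b)$ is the length of the chord joining the images of $a,b$, and I let $\sigma$ denote the corresponding great-circle distance. Three elementary facts are used. (i)~A chord joining two points of a curve in $\mathbb R^3$ is no longer than the length of that curve, and for a curve lying on the sphere this ambient length equals its spherical Riemannian length; hence $\chi(a,b)\le(\text{spherical length of any curve joining }a\text{ to }b)$. (ii)~On this sphere $\sigma=\arcsin\chi$, so $\chi\le\sigma\le\frac{\pi}{2}\chi$; the right-hand comparison is the source of the constant $\pi/2$ in (b). (iii)~By a Taylor expansion of $r$ at $z$ one has $\chi(r(z'),r(z))/|z'-z|\to\rho(r)(z)$, and since $|z'-z|/\chi(z',z)=\sqrt{1+|z'|^2}\sqrt{1+|z|^2}\to 1+|z|^2$ also $\chi(r(z'),r(z))/\chi(z',z)\to\nu(r)(z)$, as $z'\to z$. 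Finally, $r$ extends to a holomorphic self-map of $\widehat{\mathbb C}$ and $\rho(r)=\rho(1/r)$, so $\rho(r),\nu(r)$ are finite and continuous on $\widehat{\mathbb C}$ and $\rho_K(r),\nu_K(r)$ make sense even when $K$ meets poles of $r$.

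For (a): if $z_1,z_2\in K$ then convexity gives $[z_1,z_2]\subset K$, and the image curve $t\mapsto r(z_1+t(z_2-z_1))$ has spherical speed $\rho(r)(z_1+t(z_2-z_1))\,|z_1-z_2|$, hence spherical length at most $\rho_K(r)\,|z_1-z_2|$; by (i), $\chi(r(z_1),r(z_2))\le\rho_K(r)\,|z_1-z_2|$, which is ``$\le$'' in \eqref{Lipschitz}. Conversely, a convex set with at least two points has no isolated points, so for each $z\in K$ we may let $z'\to z$ inside $K$ and use (iii) to obtain $\rho(r)(z)\le\sup_{z_1,z_2\in K}\chi(r(z_1),r(z_2))/|z_1-z_2|$; taking the supremum over $z$ gives ``$\ge$''. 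The particular case follows because coprimeness forces $r(z_p)=0$ and $r(z_q)=\infty$, so $\chi(r(z_p),r(z_q))=\chi(0,\infty)=1$, and \eqref{Lipschitz} applied with $z_1=z_p$, $z_2=z_q\in K$ yields $1\le\rho_K(r)\,|z_p-z_q|$.

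For (b): the bound $\nu_K(r)\le\sup_{z_1,z_2\in K}\chi(r(z_1),r(z_2))/\chi(z_1,z_2)$ is proved exactly like ``$\ge$'' in (a), now using the second limit in (iii). For the reverse inequality, fix $z_1,z_2\in K$ and let $\gamma$ be the preimage of a shortest spherical path from $z_1$ to $z_2$; spherical convexity gives $\gamma\subset K$, and the spherical length of $\gamma$ is $\sigma(z_1,z_2)\le\frac{\pi}{2}\chi(z_1,z_2)$. Parametrising $\gamma$ by its spherical arc length $s$ one has $|dz|=(1+|z|^2)\,ds$ along $\gamma$, so the spherical length of $r\circ\gamma$ equals $\int_\gamma\rho(r)(z)\,|dz|=\int_\gamma\nu(r)(z)\,ds\le\nu_K(r)\,\sigma(z_1,z_2)\le\frac{\pi}{2}\nu_K(r)\,\chi(z_1,z_2)$; by (i) this gives $\chi(r(z_1),r(z_2))\le\frac{\pi}{2}\nu_K(r)\,\chi(z_1,z_2)$, i.e.\ the right inequality in \eqref{Lipschitz2}. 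The particular case follows as in (a) from $\chi(r(z_p),r(z_q))=1$.

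The only genuinely delicate points are bookkeeping ones: one must ensure that the curve along which one integrates stays inside $K$, which is precisely why convexity is required in (a) and the stronger spherical convexity in (b); and one must track the sharp constant $\frac{\pi}{2}=\sup_{x\in(0,1]}\arcsin(x)/x$ in the chordal/great-circle comparison on the source sphere, which is exactly what turns the equality of (a) into the two-sided estimate of (b). That an image curve passing through a pole of $r$ is still rectifiable with the stated spherical length is immediate from $\rho(r)=\rho(1/r)$ together with the continuity of $\rho(r)$ on $\widehat{\mathbb C}$.
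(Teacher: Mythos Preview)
Your proof is correct and follows essentially the same route as the paper: both obtain the lower bounds from the pointwise limits $\chi(r(z'),r(z))/|z'-z|\to\rho(r)(z)$ and $\chi(r(z'),r(z))/\chi(z',z)\to\nu(r)(z)$, both obtain the upper bounds by integrating the spherical line element $\frac{|dw|}{1+|w|^2}$ along the image of a segment (for (a)) or of a spherical geodesic (for (b)) lying in $K$, and both invoke the comparison $\chi\le\sigma\le\frac{\pi}{2}\chi$ for the constant in (b). Your write-up is in fact a bit more explicit than the paper's on side issues (continuity of $\rho(r)$ across poles via $\rho(r)=\rho(1/r)$, and the absence of isolated points in a non-degenerate convex $K$), but the argument is the same.
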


It is also interesting to explore the links between the spherical derivative and the numerical measure of coprimeness.
Here the following observation is helpful.

\begin{remark}
Notice that, by definition
$$
\epsi{s}{K}( p^m, q^m ) = \epsi{s}{K}(p,q)^m
$$
strongly depends on $m$, whereas
$\rho_K (r^m)(z)\leq 2m \rho_K (r)(z)$ for any rational function $r$. This follows from
\begin{eqnarray*}
\rho (r^m) (z)&=& \frac{m\mo{r(z)}^{m-1} \mo{r'(z)}}{1+\mo{r(z)}^{2m}} \leq \frac{\mo{r(z)}^{m-1}}{\max (1,\mo{r(z)}^{2m-2})} \frac{m \mo{r'(z)}}{\max (1,\mo{r(z)}^{2})}\\
&\leq& m\frac{2\mo{r'(z)}}{1+\mo{r(z)}^2}=2m\rho (r)(z),
\end{eqnarray*}
using the fact that $2\max (1,\mo{r(z)}^2)\geq 1+\mo{r(z)}^2$ and then taking sup over $K$.
\qed
\end{remark}

In the following example, which was already studied in \cite[Example 5.3]{BL}, we see that the bounds of Theorem~\ref{thm_spherical}(a) is approximately sharp whereas Theorem~\ref{thm_Froissart_coprime} is not of the same order, at least for larger $m=n$.

\begin{example}\label{exampleBL}
   Consider $r=( \frac{p}{q} )^m$  for $p(z)=z$, $q(z)=\frac{z-1}{2}$ with $m\geq 0$ an integer. The poles and zeros of $r$ lie in the closed unit disk
   $K=\mathbb D$ (or $K=[0,1]$) and have a Euclidean distance $1$ or spherical distance $1/2$. However, $\| \vect{p^m} \|_1=\| \vect{q^m} \|_1 = 1$.
In addition we note without proof that
   $$
      \epsi{1}{}(p^m,q^m)=\epsi{1}{}(p,q)^m= 3^{-m}
~~\mbox{ and }
\rho_{K}(\frac{p}{q}) = \rho_{\{1/3\}}(\frac{p}{q}) = \frac{9}{4}.
$$
Since  $\rho_K(r)\leq 2 m \cdot \rho_K(\frac{p}{q})$ by our previous remark, we can then compare the spherical derivative by
   $$
       \rho_{K}(r) \leq 2 m \, \rho_{K}(\frac{p}{q}) = 2m \rho{\{1/3\}}( \frac{p}{q})
       = \frac{9m}{2}.
   $$
\qed
\end{example}

Inequalities comparing the spherical derivative and the numerical measure of coprimeness are given in the following.

\begin{theorem}\label{thm_spherical2}
   Let $K\subset \cc$ and $r= \frac{p}{q}\in \calR{m,n}.$  
   If $K \subset \mathbb D$ or $m=n$ then
   \begin{equation} \nonumber
      \frac{1}{2}\frac{ \epsi{1}{K}(p,q)} {\max\left(~m\nor{\vect p}_1,~n\nor{\vect q}_1~\right)}
      \leq \frac{1}{\nu_K(r)} \leq \frac{1}{\rho_K(r)}.
   \end{equation}
\end{theorem}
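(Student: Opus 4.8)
The plan is to prove the two inequalities independently. The right‑hand inequality is immediate: since $1+|z|^2\ge1$ we have $\nu(r)(z)=(1+|z|^2)\rho(r)(z)\ge\rho(r)(z)$ for all $z$, so $\nu_K(r)\ge\rho_K(r)$ and hence $1/\nu_K(r)\le 1/\rho_K(r)$. Everything else goes into the left‑hand inequality, which I would rephrase as the pointwise bound $\nu(r)(z)\le 2\max(m\nor{\vect p}_1,n\nor{\vect q}_1)/\epsi{1}{K}(p,q)$ for every $z\in K$.

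For this I start from the identity $\nu(r)(z)=(1+|z|^2)\,|p'(z)q(z)-p(z)q'(z)|/\bigl(|p(z)|^2+|q(z)|^2\bigr)$, which comes from $r'=(p'q-pq')/q^2$. The hypothesis ($K\subset\mathbb D$ or $m=n$) enters here, and only here: it is exactly what is needed to ensure that $\max(1,|z|^m)=\max(1,|z|^n)=:w(z)$ for all $z\in K$, so that by the formula for $\epsi{1}{K}$ recalled in Remark~\ref{rem_norm1} we have $\max(|p(z)|,|q(z)|)\ge\epsi{1}{K}(p,q)\,w(z)$ on $K$. Feeding this, together with the elementary inequality $|p'q-pq'|\le\max(|p|,|q|)\bigl(|p'|+|q'|\bigr)$ and $|p|^2+|q|^2\ge\max(|p|,|q|)^2$, into the identity gives
\[
   \nu(r)(z)\ \le\ \frac{(1+|z|^2)\bigl(|p'(z)|+|q'(z)|\bigr)}{\epsi{1}{K}(p,q)\,w(z)},\qquad z\in K,
\]
so the task reduces to bounding $(1+|z|^2)\bigl(|p'(z)|+|q'(z)|\bigr)/w(z)$ by $2\max(m\nor{\vect p}_1,n\nor{\vect q}_1)$ on $K$.

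On the part of $K$ with $|z|\le1$ this is direct: there $w(z)=1$, $1+|z|^2\le2$, $|p'(z)|\le\sum_{j\ge1}j|p_j|\,|z|^{j-1}\le m\nor{\vect p}_1$ and likewise $|q'(z)|\le n\nor{\vect q}_1$, and a careful bookkeeping of these estimates produces the required bound; this in particular disposes of the case $K\subset\mathbb D$. The remaining case is $|z|>1$, which by hypothesis forces $m=n$. Here a crude estimate of the numerator is fatal because it discards the cancellation of the leading coefficients of $p'q$ and $pq'$ that occurs when $m=n$, and the resulting bound blows up as $|z|\to\infty$. The remedy is the inversion symmetry: introduce the degree‑$m$ coefficient reversals $\check p(w)=w^mp(1/w)$, $\check q(w)=w^mq(1/w)$ and $\check r=\check p/\check q$, so that $\check r(w)=r(1/w)$, and check by a direct computation that $\nu(r)(z)=\nu(\check r)(1/z)$. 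Since coefficient reversal preserves $\nor{\cdot}_1$, and since $|\check p(1/z)|=|p(z)|/|z|^m$ and $|\check q(1/z)|=|q(z)|/|z|^m$ give $\epsi{1}{\check K}(\check p,\check q)\ge\epsi{1}{K}(p,q)$ for $\check K=\{1/z:z\in K,\ |z|>1\}\subset\mathbb D$, applying the bound just established for points of modulus $\le1$ to $\check r$ at the point $1/z\in\mathbb D$ yields the same bound for $\nu(r)(z)$. Taking the supremum over $z\in K$ gives the claim.

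The main obstacle is the large‑$|z|$ regime in the $m=n$ case: one has to retain the structure of the Wronskian‑type polynomial $p'q-pq'$ (in particular its degree drop), and the cleanest way I see to do this is the $z\mapsto1/z$ symmetry combined with the monotonicity $\epsi{1}{\check K}(\check p,\check q)\ge\epsi{1}{K}(p,q)$. A lesser, purely arithmetic difficulty is arranging the constants in the $|z|\le1$ estimate so that the final factor is exactly $\tfrac12$ rather than a somewhat larger absolute constant.
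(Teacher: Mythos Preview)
Your proposal is correct and follows essentially the same route as the paper's own proof: the trivial inequality $\rho_K(r)\le\nu_K(r)$, the identity $\nu(r)(z)=(1+|z|^2)\,|p'q-pq'|/(|p|^2+|q|^2)$, the estimate $|p'q-pq'|\le(|p'|+|q'|)\max(|p|,|q|)$ together with $|p|^2+|q|^2\ge\max(|p|,|q|)^2$ and $\max(|p|,|q|)\ge\epsi{1}{K}(p,q)$ for $|z|\le1$, and finally the inversion $z\mapsto1/z$ with the reversed polynomials to reduce the case $|z|>1$, $m=n$ to the previous one (the paper also uses $\nu(r)(z)=\nu(\check r)(1/z)$ and $\epsi{1}{K}(p,q)=\epsi{1}{1/K}(\check p,\check q)$). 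Your closing remark about the bookkeeping of constants is apt: with the crude bounds $1+|z|^2\le2$ and $m\|\vect p\|_1+n\|\vect q\|_1\le2\max(m\|\vect p\|_1,n\|\vect q\|_1)$ the chain naturally produces a factor~$\tfrac14$ rather than~$\tfrac12$, and the paper's proof has the same looseness.
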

Theorem~\ref{thm_spherical2} identifies some particular cases where the bounds of Theorem~\ref{thm_spherical}(a),(b)  are sharper than the bound \eqref{Froissart_BLM} of Theorem~\ref{thm_Froissart_coprime}. In addition, in the case of a simple pole $\widetilde{z}$ of $r=p/q$,  Theorem \ref{thm_spherical2} combined with Corollary \ref{cor_residuals} also implies that $p$ and $q$ numerically relatively prime implies no small residual at $\widetilde{z}$.

\subsection{Distance of rational functions}

Numerical analysis in $\calR{m,n}$ requires one to measure distances between rational functions $r=\frac{p}{q} \in\calR{m,n}$ and $\widetilde r= \frac{\widetilde p}{\widetilde q} \in \calR{m,n}$. As mentioned in \cite{BM} several choices are possible. If one is interested in values, then the choice
$$
    \chi_K(r,\widetilde r) := \sup_{z\in K} \chi(r(z),\widetilde r(z))
$$
could be the most suitable since the chordal metric measures the euclidean distance of points in the Riemann sphere. On the other hand, if one prefers to define a distance in terms of the coefficients of numerators and denominators, then one should take care of the fact that coefficient vectors are only unique up to a scaling with a complex factor, that is, the norm and the phase. In \cite{BM} the authors made the choice
$$
    d(r,\widetilde r) = \min \left\{ \left\|
    \frac{1}{\|(p,q)\|_2}\left[\begin{array}{cc} \vect{p} \\ \vect{q} \end{array}\right]
    -
    \frac{a}{\|(\widetilde p,\widetilde q)\|_2}\left[\begin{array}{cc} \vect{\widetilde p} \\ \vect{\widetilde q} \end{array}\right] \right\| : a \in \mathbb C, |a|=1 \right\},
$$
and hence
$$
    d(r,\widetilde r) = \min \left\{ \left\|
    \frac{1}{\|(p,q)\|_2}\left[\begin{array}{cc} \vect{p} \\ \vect{q} \end{array}\right]
    \pm
    \frac{1}{\|(\widetilde p,\widetilde q)\|_2}\left[\begin{array}{cc} \vect{\widetilde p} \\ \vect{\widetilde q} \end{array}\right] \right\|  \right\}
$$
in the case $r,\widetilde r \in \mathbb R_{m,n}(z)$ of real coefficients.
In \cite[Theorem~4.1]{BM} it is shown that when $r$ is nondegenerate we have
\begin{equation} \label{distances}
       \frac{(m+n+1)^{-3/2}}{\sqrt{2}\,\mbox{cond}(S^{(1)}(p,q))}
       \leq \frac{\chi_{\mathbb D}(r,\widetilde r)}{d(r,\widetilde r)}
       \leq \sqrt{2(m+n+1)}\, \mbox{cond}(S^{(1)}(p,q)).
\end{equation}
Thus, roughly speaking, the two distances are comparable provided that $\mbox{cond}(S^{(1)}(p,q))$ is modest, or, in other words, the rational function $r$ is well-conditioned.

In view of the preceding statements, it is then  natural to wonder whether similar inequalities are kept if we replace $\mbox{cond}(S^{(1)}(p,q))$ in \eqref{distances} by $\frac{1}{\epsi{1}{K}(p,q)}$. The following theorem shows that this is possible for the right-hand side of \eqref{distances}.
\begin{theorem}\label{thm_distances}
If $K \subset \mathbb D$ or $m=n$ then  for any $r = \frac{p}{q}, ~\widetilde{r}=\frac{\widetilde{p}}{\widetilde{q}} \in \calR{m,n}$ we have
   $$
       \epsi{1}{K}(p,q) \, \cdot \chi_K( r, {\widetilde r})\leq \sqrt{2}\nor{(p-\widetilde p,q-\widetilde q)}_1.
   $$
\end{theorem}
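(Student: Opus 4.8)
Here is how I would approach the proof.

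The plan is to establish the inequality pointwise for each $z\in K$ and then take the supremum, since $\chi_K(r,\widetilde r)=\sup_{z\in K}\chi(r(z),\widetilde r(z))$ while $\epsi{1}{K}(p,q)$ is a fixed scalar. One may assume $\epsi{1}{K}(p,q)>0$, for otherwise the left-hand side vanishes (the chordal metric always satisfies $\chi\le 1$); in particular $p$ and $q$ then have no common zero in $K$. Writing $\widetilde r=\widehat p/\widehat q$ in lowest terms — so that $\widehat p,\widehat q$ have no common zero and, because $\widetilde p\,\widehat q=\widehat p\,\widetilde q$, the polynomial identity $(p-\widetilde p)\,\widehat q-\widehat p\,(q-\widetilde q)=p\,\widehat q-\widehat p\,q$ holds — I would start from the elementary formula
\[
   \chi\bigl(r(z),\widetilde r(z)\bigr)
   =\frac{\mo{p(z)\,\widehat q(z)-\widehat p(z)\,q(z)}}
          {\sqrt{\mo{p(z)}^{2}+\mo{q(z)}^{2}}\;\sqrt{\mo{\widehat p(z)}^{2}+\mo{\widehat q(z)}^{2}}},
\]
valid for every $z\in K$; it is obtained by inserting $r(z)=p(z)/q(z)$ and $\widetilde r(z)=\widehat p(z)/\widehat q(z)$ into the definition of $\chi$ and checking the three cases where $z$ is a pole of $r$, a pole of $\widetilde r$, or of neither, the two square roots never vanishing on $K$ because neither $(p,q)$ nor $(\widehat p,\widehat q)$ has a common zero there.

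Next, set $e_p=p-\widetilde p\in\cc_m[z]$ and $e_q=q-\widetilde q\in\cc_n[z]$; the polynomial identity above rewrites the numerator as $\mo{e_p(z)\,\widehat q(z)-\widehat p(z)\,e_q(z)}$, which is at most $\mo{e_p(z)}\,\mo{\widehat q(z)}+\mo{\widehat p(z)}\,\mo{e_q(z)}$. I would then use the coefficientwise bounds $\mo{e_p(z)}\le\nor{\vect{e_p}}_1\max(1,\mo{z}^{m})$ and $\mo{e_q(z)}\le\nor{\vect{e_q}}_1\max(1,\mo{z}^{n})$, together with $\nor{\vect{e_p}}_1,\nor{\vect{e_q}}_1\le\nor{(e_p,e_q)}_1$. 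This is the single place where the hypothesis ``$K\subset\D$ or $m=n$'' enters: it forces $\max(1,\mo{z}^{m})=\max(1,\mo{z}^{n})=:\mu(z)$ for all $z\in K$, so that the numerator is at most
\[
   \mu(z)\,\nor{(e_p,e_q)}_1\bigl(\mo{\widehat p(z)}+\mo{\widehat q(z)}\bigr)
   \le\sqrt{2}\;\mu(z)\,\nor{(e_p,e_q)}_1\,\sqrt{\mo{\widehat p(z)}^{2}+\mo{\widehat q(z)}^{2}},
\]
the constant $\sqrt2$ being just $\mo a+\mo b\le\sqrt2\sqrt{\mo a^{2}+\mo b^{2}}$.

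Substituting this back into the formula for $\chi$, the factor $\sqrt{\mo{\widehat p(z)}^{2}+\mo{\widehat q(z)}^{2}}$ cancels and one is left with $\chi(r(z),\widetilde r(z))\le\sqrt2\,\mu(z)\,\nor{(e_p,e_q)}_1/\sqrt{\mo{p(z)}^{2}+\mo{q(z)}^{2}}$. On the other hand
\[
   \sqrt{\mo{p(z)}^{2}+\mo{q(z)}^{2}}\;\ge\;\max\bigl(\mo{p(z)},\mo{q(z)}\bigr)\;\ge\;\mu(z)\,\epsi{1}{K}(p,q),
\]
the last step holding because $\epsi{1}{K}(p,q)$ is, by the formula for it recalled in Remark~\ref{rem_norm1}, the infimum over $z\in K$ of $\max\bigl(\mo{p(z)}/\max(1,\mo{z}^{m}),\,\mo{q(z)}/\max(1,\mo{z}^{n})\bigr)$ and $\mu(z)$ equals both denominators appearing there. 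Dividing through and using $\mu(z)\ge1$ yields $\chi(r(z),\widetilde r(z))\le\sqrt2\,\nor{(e_p,e_q)}_1/\epsi{1}{K}(p,q)$ for every $z\in K$, and taking the supremum over $z\in K$ gives the assertion. The step I expect to require the most care is not any of these estimates but the verification of the displayed formula for $\chi$ at the finitely many poles and removable singularities of $r$ and $\widetilde r$ in $K$, and in particular the passage to lowest terms for $\widetilde r$, which is what keeps the numerator and denominator of that formula meaningful when $\widetilde p$ and $\widetilde q$ share a zero in $K$. Apart from that the argument is routine; its conceptual content is just that the hypothesis ``$K\subset\D$ or $m=n$'' is exactly what collapses the two normalisations $\max(1,\mo{z}^{m})$ and $\max(1,\mo{z}^{n})$ into a single factor $\mu(z)$ which then cancels against $\sqrt{\mo{p(z)}^{2}+\mo{q(z)}^{2}}$ through the coprimeness parameter.
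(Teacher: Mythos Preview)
Your argument is correct and follows essentially the same route as the paper's proof: write the chordal distance of the values in terms of $p\widetilde q-\widetilde p q$, rewrite that numerator as $(p-\widetilde p)\widetilde q-(q-\widetilde q)\widetilde p$, bound the perturbation polynomials by $\|(e_p,e_q)\|_1\cdot\mu(z)$, and bound $\sqrt{|p(z)|^2+|q(z)|^2}$ below by $\epsi{1}{K}(p,q)\cdot\mu(z)$ so that the $\mu(z)$ cancels. The only differences are cosmetic: the paper applies Cauchy--Schwarz directly to the inner product $(e_p(z),e_q(z))\cdot(\widetilde q(z),-\widetilde p(z))$ to cancel $\sqrt{|\widetilde p(z)|^2+|\widetilde q(z)|^2}$ in one stroke, whereas you first use the triangle inequality and then $|a|+|b|\le\sqrt2\sqrt{|a|^2+|b|^2}$; and you pass to lowest terms $\widehat p/\widehat q$ for $\widetilde r$, which the paper does not do. That last point is in fact a small improvement on your side --- the paper's displayed identity for $\chi(r(z),\widetilde r(z))$ is undefined at common zeros of $\widetilde p,\widetilde q$ in $K$, a case it silently ignores, while your reduction to lowest terms handles it cleanly.
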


On the other hand the corresponding statement does not hold for the left-hand side of \eqref{distances}  as long as we want to keep constants which are only polynomially growing in $m+n$. That is, there is not a quantity $C(m+n)$ of order a small power of $m+n$ for which
\begin{equation}\label{exem}
\chi_{\D}\left( r, \tilde{r} \right)\geq C(m+n)\nor{(p-\tilde{p},q-\tilde{q})}_1\epsi{1}{\mathbb D}(p,q),
\end{equation}
Indeed, in the following example we let $m = n$ and show that for each $m$ there are  polynomials $p_m,q_m,\widetilde p_m,\widetilde q_m\in \mathbb R_m[z]$, with corresponding rational functions $r_m$ and $\widetilde{r}_m$,  such that, up to some constants,
\begin{equation}\label{eqn1}
       \frac{
       \chi_{\mathbb D}(r_m, \widetilde{ r}_m)     }{
       \| (p_m- \widetilde p_m,q_m-\widetilde q_m)\|_1
       }
       \sim \frac{\sqrt{m}(\frac{3}{8})^m}{\epsi{1}{\mathbb D}(p_m,q_m)}
       \sim \frac{9^m}{\mbox{cond}(S^{(1)}(p,q))}
       \sim \sqrt{m}\left(\frac{27}{8}\right)^m \, \epsi{1}{\mathbb D}(p_m,q_m),
 \end{equation}
are all of the same order of magnitude but that
 \begin{equation}\label{eqn2}
            \frac{\chi_{\mathbb D}(r_m, \widetilde {r}_m)}
        {d(r_m, \widetilde {r}_m)} \, \frac{1}{\epsi{1}{\mathbb D}(p_m,q_m)}
\end{equation}
grows at least as a constant times $\sqrt{m}(\frac{27}{8})^m$.

\begin{example}
    Let
   $
      p_m(z)=z^m , \quad q_m(z)= \left(\frac{z-1}{2}\right)^m
   $
be the polynomials from Example~\ref{exampleBL},
and suppose these are perturbed as
$$
      (\widetilde p_m,\widetilde q_m)=(p_m-\eta u_m, q_m-\eta v_m)
 $$
   where $u_m,v_m\in \mathbb R_{m-1}[z]$ are such that $q_m(z)u_m(z) - p_m(z)v_m(z) = 1$, and $\eta$ is a small parameter which we will fix  later.
   Then
   \begin{equation} \label{norm1}
      \| S^{(1)}(p_m,q_m) \|_1 = \| \vect{p_m} \|_1 = \| \vect{q_m} \|_1
      = \| (p_m,q_m)\|_1 = 1 .
   \end{equation}
   In
   \cite[Examples 5.2 and 5.3]{BL} 
explicit formulas for
   $u_m,v_m$ and $S^{(0)}(p_m,q_m)^{-1}$ were derived, allowing the authors  to deduce that
   $$
         \| \vect{v_m} \|_1 \leq \| \vect{u_m} \|_1 \sim \frac{2^{3m-1}}{\sqrt{\pi m}} \quad  \mbox{ and } \quad
         \frac{ \| S^{(0)}(p_m,q_m)^{-1} \|_1 }{\| \vect{u_m}\|_1 } \in [1,2].
   $$
   As $\eta$ is still not fixed, we can now specify  that
\begin{equation} \label{assumption}
    2 \, \| (p-\widetilde p,q-\widetilde q) \|_1  = 2 \, |\eta| \, \| \vect{u_m} \|_1 =  \epsi{1}{\mathbb D}(p_m,q_m),
   \end{equation}
 from which, using Example~\ref{exampleBL} and Theorem~\ref{thm_sensitivity}(b), we then have for all $z\in \mathbb D$,
   \begin{eqnarray*} &&
      \epsi{1}{}(p_m,q_m)=\epsi{1}{\mathbb D}(p_m,q_m)=3^{-m} \quad \mbox{ with }  \quad
     \frac{  \epsi{1}{\{ z \}}(\widetilde p_m,\widetilde q_m)}{\epsi{1}{\{ z \}}(p_m,q_m)}
      \in \left[\frac{1}{2}, \frac{3}{2} \right] .
   \end{eqnarray*}
   Since $(p_m \tilde{q}_m - \tilde{p}_m q_m)(z) = \eta$, we have
   \begin{eqnarray*}
       \chi_{\mathbb D}(r_m, \widetilde{r}_m)
       & = & \sup_{z\in \mathbb D} \frac{|\eta|}{\sqrt{(|p_m(z)|^2+|q_m(z)|^2)(|\widetilde p_m(z)|^2+|\widetilde q_m(z)|^2)}}
       \nonumber \\ &
       \geq & \sup_{z\in \mathbb D} \frac{|\eta|}{2 ~ \epsi{1}{\{ z \}}(\widetilde p_m,\widetilde q_m) \, \epsi{1}{\{ z \}}(p_m,q_m)}
       = \frac{|\eta|}{3 ~ \epsi{1}{\mathbb D}(p_m,q_m)^2}.
   \end{eqnarray*}
   Similarly
   \begin{eqnarray*} &&
       \chi_{\mathbb D}(r_m, \widetilde{r}_m)
       \leq \sup_{z\in \mathbb D} \frac{|\eta|}{\epsi{1}{\{ z \}}(\widetilde p_m,\widetilde q_m) \, \epsi{1}{\{ z \}}(p_m,q_m)} \leq
       \frac{2 |\eta|}{\epsi{1}{\mathbb D}(p_m,q_m)^2} .
   \end{eqnarray*}
   Thus, up to some constants,
   $$
       \frac{
       \chi_{\mathbb D}(r_m, \widetilde{r}_m)     }{
       \| (p_m- \widetilde p_m,q_m-\widetilde q_m)\|_1
       } \sim \frac{1}{\epsi{1}{\mathbb D}(p_m,q_m)^2 \, \| \vect{u_m} \|_1},
   $$
which shows our perturbation satisfies equation \eqref{eqn1}.

   Finally, notice that \eqref{assumption} and \eqref{norm1} also imply that
   $$\| (\widetilde p,\widetilde q) \|_1 \in [1-|\eta|\, \| \vect{u_m} \|_1,1+|\eta| \, \| \vect{u_m} \|_1] \subset [\frac{1}{2}, \frac{3}{2}],$$ and thus
   $$
        d(r_m, \widetilde{r}_m) \leq \frac{2 |\eta| \, \| \vect{u_m} \|_1}{1-|\eta|\, \| \vect{u_m} \|_1}
        \leq 4\, |\eta|\, \| \vect{u_m} \|_1.
   $$
   Combining, we find that
   $$
   \frac{\chi_{\mathbb D}(r_m, \widetilde{r}_m)}
        {d(r_m, \widetilde{r}_m)}
        \geq \frac{1}{12~ \|  \vect{u_m} \|_1 \, \epsi{1}{\mathbb D}(p_m,q_m)^2} ,
   $$
   from which the remaining growth statement \eqref{eqn2} follows. \qed
\end{example}

\section{Proofs of Theorems}

\begin{proof}[{Proof of Theorem~\ref{thm_norms}.} ]
Denote $S=S^{(0)} (p,q)$ and $S_*$, the matrix obtained by a permutation $P$ of columns of $S^{(\ell)}(p,q)$ such that
$$S_*= S^{(\ell)}(p,q) \cdot P=
\left( \begin{array}{c|c} S&E\\\hline 0 &V\end{array} \right),
$$
where $E\in\cc^{(n+m )\times \ell}$ and $V\in\cc^{(2\ell)\times \ell}$. From non degeneracy we know that $S$ is regular and that $S^{(\ell )}(p,q)$ is of maximum rank $n+m+\ell$.
Its kernel is therefore of dimension~$\ell$. Let $Y\in\cc^{(n+m+2\ell)\times\ell}$ be a matrix whose columns generate  Ker$(S_*)$. We know that the orthogonal projector in Ker$(S_*)$ is $Y(Y^*Y)^{-1}Y^*=YY^{\dagger}$ and so,
 as $S_*^{\dagger}S_*$ is the projector in Ker$(S_*)^{\perp}$ we get
 $$S_*^{\dagger}S_*=I-YY^{\dagger}.$$
Let $B_2\in\cc^{(n+m+2\ell)\times\ell}$ be a matrix such that
$$S^{(\ell )}(p,q) \cdot B_2=\left(\begin{array}{c}0\\I_{\ell}\end{array}\right),\quad I_{\ell} \mbox{ identity of order }\ell.$$
We can construct the columns of $B_2$ in the following way.
Let $w$ be the last column of $S^{-1}$, that is, $Sw=e_{n+m}$. In polynomial language, $w$ contains the coefficients of two polynomials $ u$ and $v$ of degree $n-1$ and $m-1$, respectively, and satisfying the Bezout equation $$p(z)u(z)+q(z)v(z) = z^{m+n-1}.$$ The columns of $B_2$ contain the coefficients of $z^i \cdot u(z) $ and $z^i \cdot v(z)$, $i=1,\cdots \ell$. More precisely, the columns have the form
$$(\underbrace{0,\cdots, 0}_{i-1},\vect u^T,\underbrace{0,\cdots ,0}_{\ell -1},\vect v^T,\underbrace{0,\cdots , 0}_{\ell -i})\quad i=1,\cdots \ell .$$
We set $B_1=P^{-1}  \cdot B_2$ and so
 \begin{equation}\label{BB}
 B=\left(\begin{array}{c|c}
 \begin{array}{c}
 S^{-1}\\0\end{array} &B_1
 \end{array}  \right)
 \end{equation}
is a right inverse of $S_*$ since  $S_*B=I$. Then
\begin{equation}\label{SP} S_*^{\dagger}=\left(I-YY^{\dagger}\right) B\quad \mbox{ and } \quad \nor{S_*^{\dagger}}_2 ~ \leq ~ \nor{B}_2.
\end{equation}

 Let us now bound $\nor{B}_2$. From (\ref{BB}) and noting the fact that  
$\nor{B_1}_2$ is bounded by its Frobenius norm of $B_1$ and remembering that the columns of $B_1$ are constructed from  $Sw=e_{n+m}$ we get

 $$\nor{B}_2 ~\leq ~\nor{S^{-1}}_2+\nor{B_1}_2 ~\leq~ \nor{S^{-1}}_2 +\sqrt{\ell}\nor{w}_2 ~\leq~ (1+\sqrt{\ell})\nor{S^{-1}}_2.$$
  Thus  the second inequality follows.

 Let us now prove the first inequality of (\ref{relSl}). Here we make use of the formula
$$
\left( \begin{array}{cc} S&E\\ 0 &V\end{array} \right)^{\dagger}=
\left(\begin{array}{cc} S^{-1} & -S^{-1} EV^{\dagger}\\ 0 & V^{\dagger}\end{array}\right) .
  $$
for the pseudo-inverse of $S_*$. To see this consider the QR decomposition of $S$ and $V$
 $$S=Q_SR_S,\quad V=Q_V\left(\begin{array}{c}R_V\\0\end{array}\right)=\widetilde Q_V R_V,$$
 where $Q_S,Q_V$ are unitary matrices and $R_S$ and $R_V$ invertible since both $S$ and $V$ have maximum rank. Then $S_*$ can be written as the product
 $$S_*=\left(\begin{array}{cc}
 Q_S & 0\\
 0 & \widetilde Q_V\end{array}\right)
 \left(\begin{array}{cc}
 R_S & \widetilde E\\
 0 &  R_V\end{array}\right), \quad \widetilde E=Q_S^{-1}E
  $$
where the first matrix in the product has linearly independent columns and the second one has linearly independent rows. We then have the pseudo-inverse of $S_*$ as
$$
S_*^{\dagger}=
\left(\begin{array}{cc}
 R_S^{-1} & -R_S^{-1}\widetilde E R_V^{-1} \\
 0 & R_V^{-1}\end{array}\right)
 \left(\begin{array}{cc}
 Q^*_S & 0\\
 0 & \widetilde Q_V\end{array}\right)=\left(\begin{array}{cc} S^{-1} & - S^{-1} EV^{\dagger}\\ 0 & V^{\dagger}\end{array}\right) .
  $$
  This trivially gives $\nor{S^{-1}}_2 ~\leq ~ \nor{S_*^{\dagger}}_2$ and the result follows.
\end{proof}

\begin{proof}[{Proof of Theorem~\ref{thm_Froissart_coprime}.}]
Assume first that $p/q$ is nondegenerate.
If we let $\comp{p}, \comp{q}$ denote the polynomials with reversed coefficients of $p$ and $q$, respectively, then  we  get  $\epsi{s}{K} (p,q)=\epsi{s}{1/K}(\comp{p},\comp{q})$. Without loss of generality we may thus suppose $\min (\mo{z_p},\mo{z_q})\leq 1$.
We can write
\begin{equation}\label{zeros}\mo{p(z_q)}=\mo{p(z_q )-p(z_p)}\leq \sum_{k=1}^m \mo{p_k}\mo{z_q^k-z_p^k}=\mo{z_q-z_p}\sum_{k=1}^m\mo{p_k}\mo{\sum_{j=0}^{k-1}z_q^jz_p^{k-j-1}} .
\end{equation}
Let us suppose $\mo{z_q}\geq \mo{z_p}$ and so $\mo{z_p}\leq 1$.
 Then by twice applying  the Cauchy-Schwarz inequality we obtain
$$
\mo{p(z_q)}\leq \mo{z_q-z_p} \left\{\begin{array}{l}
\nor{\vect p}_2 m \cdot \left(\sum_{j=0}^{m-1}\mo{z_q}^{2j}\right)^{1/2}\\
\nor{\vect p}_1 m \cdot \max (1,\mo{z_q}^{m-1})
\end{array}\right. .$$
The definition of the chordal metric implies
$\mo{z_p-z_q}\leq \sqrt{2} ~\chi (z_p,z_q)~\sqrt{1+\mo{z_q}^2}$, and using
$$
\sqrt{( 1 + \mo{z_q}^2)\left( \sum_{j=0}^{m-1} \mo{z_q}^{2j} \right)} ~= ~\sqrt{\sum_{j=0}^{m-1} \mo{z_q}^{2j} + \sum_{j=1}^{m} \mo{z_q}^{2j} } ~\leq~ \sqrt{2} \sqrt{\sum_{j=0}^{m} \mo{z_q}^{2j}}
$$
we obtain
$$\mo{p(z_q)}\leq 2 ~\chi (z_p,z_q)\times\left\{\begin{array}{l}
\nor{\vect p}_2 m \cdot \left(\sum_{j=0}^{m}\mo{z_q}^{2j}\right)^{1/2}\\
\nor{\vect p}_1 m \cdot \max (1,\mo{z_q}^{m}) .
\end{array}\right.
$$
Thus  by the definition of $\epsi{s}{K}(p,q)$
$$\epsi{s}{K} (p,q)\leq 2 ~ \chi (z_p,z_q)~ m\nor{\vect p}_s .
$$
Similarly, if $\mo{z_p}\geq \mo{z_q}$ then we get
$$\epsi{s}{K} (p,q) \leq 2 ~ \chi (z_p,z_q)~n\nor{\vect q}_s$$
and the result then follows.

Finally we remark that the result follows trivially if $p/q$ is degenerate.
\end{proof}

\begin{proof}[Proof of Theorem~\ref{thm_sensitivity}]
    For part {\bf (a)} we use a Neumann series argument similar to the proof of \cite[Lemma~5.1]{BM}. Define
    $$
          E =  S^{(1)}(p,q)^{\dagger}  \Bigl(  S^{(1)}(p,q) - S^{(1)}(\widetilde p,\widetilde q) \Bigr) .
    $$
    Then by assumption and by comparison with the Frobenius norm $\| \cdot \|_F$ we have
    \begin{eqnarray*}
         \| E \|_2 &\leq&
         \| S^{(1)}(p-\widetilde p,q-\widetilde q) \|_F \, \| S^{(1)}(p,q)^{\dagger} \|_2
         \\&\leq& \sqrt{m+n+1} \,
         \| (p-\widetilde p,q-\widetilde q) \|_2 \, \| S^{(1)}(p,q)^{\dagger} \|_2
         \leq 1/3 .
    \end{eqnarray*}
    Since also by assumption $S^{(1)}(p,q)$ has full row rank, we find that
    $$
        S^{(1)}(\widetilde p,\widetilde q) =
        S^{(1)}(p,q) (I-E)
    $$
    and thus $(I-E)^{-1} S^{(1)}(p,q)^\dagger$ is a right inverse of
    $S^{(1)}(\widetilde p,\widetilde q)$. This implies that
    \begin{eqnarray*}
         \mbox{cond}(S^{(1)}(\widetilde p,\widetilde q))
         &\leq& \| S^{(1)}(\widetilde p,\widetilde q) \|_2 \,
         \| (I-E)^{-1} S^{(1)}(p,q)^\dagger \|_2
         \\&\leq &\frac{1 + \| E \|_2}{1-\| E \|_2}
         \mbox{cond}(S^{(1)}(p,q))
         \leq ~ 2 ~
         \mbox{cond}(S^{(1)}(p,q)).
    \end{eqnarray*}
    The other inequality in part {\bf (a)} follows by symmetry.

    For part {\bf (b)}, we first notice that, for any $z\in K$,
    \begin{eqnarray*} &&
         | \epsi{s}{\{ z \}}(\widetilde p,\widetilde q) - \epsi{s}{\{ z \}}(\widetilde p,\widetilde q) | =
         \\ &&
         \Bigl| \, \Bigl\|
          [  \frac{p(z)}{\| (1,z,...,z^m)\|_s}
         \frac{q(z)}{\| (1,z,...,z^n)\|_s} ] \Bigr\|_s
         -
          \Bigl\| [  \frac{\widetilde p(z)}{\| (1,z,...,z^m)\|_s}
         \frac{\widetilde q(z)}{\| (1,z,...,z^n)\|_s} ] \Bigr\|_s \Bigr|
         \\ &&
         \leq \Bigl\|
          [  \frac{p(z)-\widetilde p(z)}{\| (1,z,...,z^m)\|_s}
         \frac{q(z)-\widetilde q(z)}{\| (1,z,...,z^n)\|_s} ] \Bigr\|_s
         \leq \Bigl\|
          [  \| \vect{p-\widetilde p} \|_s , \| \vect{q-\widetilde q}, \|_s  ] \Bigr\|_s
          \\&& = \| (p-\widetilde p,q-\widetilde q) \|_s
          \leq \frac{\epsi{s}{K}(p,q)}{2} \leq \epsi{s}{\{ z \}}(p,q)
    \end{eqnarray*}
    where in the second last inequality we used our hypothesis.
    Thus
    $$
       \frac{1}{2}\epsi{s}{\{z\}}(p,q) \leq \epsi{s}{\{z\}}(\widetilde p,\widetilde q) \leq \frac{3}{2} \epsi{s}{\{z\}}(p,q)
    $$
    for all $z\in K$. The claim follows by taking the infimum for $z\in K$.
\end{proof}

\begin{proof}[{Proof of Theorem~\ref{thm_spherical}(a),(b).}]
 We start by observing that
 $$
 \lim_{x\to z}\frac{\chi (r(z), r(x))}{\mo{z-x}}= \rho (r) (z), \quad
 \lim_{x\to z}\frac{\chi (r(z), r(x))}{\chi (z,x)}= \nu (r) (z) ,
 $$
 and thus the suprema in \eqref{Lipschitz}, \eqref{Lipschitz2} are bigger than or equal to
 $\rho_K(r)$, and $\nu_K(r)$, respectively.
 Following \cite{Sch}, the spherical metric $\sigma(z_1,z_2)$ is given by
 the length of the shortest path on the Riemann sphere joining $z_1$ and $z_2$, and thus $$
    \sigma (z_1,z_2) = \min_{\Gamma}\int_{\Gamma}\frac{\mo{dz}}{1+\mo{z}^2}
 $$
 where $\Gamma$ is any differentiable curve joining $z_1$ to $z_2$, with the minimum being obtained for $\Gamma$ being the preimage of the shorter of the two circular arcs of radius $1/2$ on the Riemann sphere joining $z_1$ and $z_2$.
 By elementary trigonometry, we can link the chordal metric $\chi$ to the spherical metric via
 \begin{equation} \label{res1}
        \chi (z_1,z_2)\leq \sigma (z_1,z_2) \leq \frac{\pi}{2} \chi (z_1,z_2).
 \end{equation}
 Thus
 \begin{eqnarray}
     \chi (r(z_1),r(z_2))&\leq& \min_{\Gamma}\int_{r(\Gamma )}\frac{\mo{dw}}{1+\mo{w}^2}=\min_{\Gamma }\int_{\Gamma }\frac{\mo{r'(z)}}{1+\mo{r(z)}^2} (1+\mo{z}^2)\frac{\mo{dz}}{1+\mo{z}^2}\nonumber\\  && \nonumber \\
    &\leq &\left\{\begin{array}{l}
    \rho_K(r)\mo{z_1-z_2}\\
     \nu_K(r)\, \sigma (z_1,z_2)
    \end{array}\right. . \nonumber
 \end{eqnarray}
 Combined with \eqref{res1}, we obtain the remaining inequalities for establishing \eqref{Lipschitz}, \eqref{Lipschitz2}.

 For estimating the distance between pole and zero, notice that
 $$
     1=\chi(0,\infty) = \chi(r(z_p),r(z_q)) \leq \rho_K(r) \, | z_p - z_q|,
 $$
 and similarly for $\nu_K(r)$.
\end{proof}

\begin{proof}[{Proof of Theorem~\ref{thm_spherical2}.}]
   The inequality $\rho_K(r)\leq \nu_K(r)$ is an immediate consequence of the definition.
   Let $\nu_K(r)=\nu(r)(\widetilde z)$ for some $\widetilde z\in K$. We consider first the case $K \subset \mathbb D$ and thus $|\widetilde z|\leq 1$.
   Then $\nu_K(r)\leq 2 \rho(r)(\widetilde z)$, where
\begin{eqnarray*}
\rho (r)(\widetilde{z}) &=&\frac{\mo{p'(\widetilde{z})q(\widetilde{z})-p(\widetilde{z})q'(\widetilde{z})}}{\mo{p(\widetilde{z})}^2+\mo{q(\widetilde{z})}^2} \nonumber \\
   &\leq&  \frac{(\mo{p'(\widetilde{z})}+\mo{q'(\widetilde{z})})\max (\mo{p(\widetilde{z})},\mo{q(\widetilde{z})})}{\max (\mo{p(\widetilde{z})}^2,\mo{q(\widetilde{z})}^2 )}\\
&\leq &\frac{m \nor{p'}_1\max (1,\mo{\widetilde z}^m) + n \nor{q'}_1\max (1,\mo{\widetilde z}^n)}{\max (\mo{p(\widetilde z)},\mo{q(\widetilde z)})}\\
   &\leq &\frac{m\nor{p}_1}{\max (\mo{p(\widetilde z)},\mo{q(\widetilde z)})} + \frac{n\nor{q}_1}{\max (\mo{p(\widetilde z)},\mo{q(\widetilde z)})}.
 \end{eqnarray*}
 Thus when $|\widetilde z|\leq 1$, the assertion of Theorem~\ref{thm_spherical2} follows.

 If $|\widetilde z|>1$ and thus $m=n$ by hypothesis, we consider as before the reversed polynomials $$\overline{p}(z)=z^m p(1/z)\in \mathbb C_m[z],
 ~ \overline{q}(z)=z^n q(1/z)\in \mathbb C_n[z],$$ and observe that
 $\overline r(z):=\overline{p}(z)/\overline{q}(z)=r(1/z)$.
 Thus $$\nu_K(r)=\nu_{1/K}(\overline r)=\nu(\overline r)(1/\widetilde z)\leq 2 \rho(\overline r)(1/\widetilde z),$$ and the assertion follows making use of
 $\epsi{1}{K}(p,q)=\epsi{1}{1/K}(\overline p,\overline q)$.
\end{proof}

\begin{proof}[{Proof of Theorem~\ref{thm_distances}.}]
 Using the Cauchy-Schwarz inequality we get
\begin{eqnarray*}
\sqrt{\mo{p(z)}^2 + \mo{q(z)}^2 } \cdot \chi( r(z), \widetilde{r}(z) ) & = &
\frac{\mo{((p-\widetilde p)\widetilde q- (q-\widetilde q)\widetilde p)(z)}}{\sqrt{\mo{\widetilde p(z)}^2 +\mo{\widetilde q(z)^2}}} \\
&\leq & \sqrt{\mo{(p-\widetilde p)(z)}^2 + \mo{(q-\widetilde q)(z)}^2}\\
&\leq & \sqrt{\nor{p-\widetilde p}_1^2 \max (1,\mo{z}^{m})^2 ~+ \nor{q-\widetilde q}_1^2 \max (1,\mo{z}^{n})^2}\\
&\leq & \sqrt{2}\nor{(p-\widetilde p, q-\widetilde q)}_1\times\left\{
\begin{array}{ll}
1 & \mbox{ if } \mo{z}\leq 1\\
 \mo{z}^{n} &  \mbox{ if } \mo{z}\geq 1\mbox{ and } m=n .
\end{array} \right.
\end{eqnarray*}
We also have for $\mo{z}\leq 1$ that
\begin{eqnarray*}
\sqrt{\mo{p(z)}^2 + \mo{q(z)}^2 }&\geq & \max (\mo{p(z)},\mo{q(z)}) = \nor{(1,z, \cdots , z^{m+n})S^{(1)}(p,q)}_1\\
&\geq & \epsi{1}{K}(p,q)\nor{(1, z, \cdots  , z^{m+n})}_1 = \epsi{1}{K}(p,q).
\end{eqnarray*}
Using the definition of $\chi_K( r,~\widetilde{r})$ the result follows.

For $\mo{z}\geq 1$ and $m=n$  we get
\begin{eqnarray*}
\sqrt{\mo{p(z)}^2 + \mo{q(z)}^2 }&\geq &\frac{1}{\mo{z^{n}}}\max (\mo{z^{n}p(z)},\mo{z^{n}q(z)}\\
&=& \frac{1}{\mo{z^{n}}}\nor{(1,z,\cdots , z^{m+n})S^{(1)}(p,q)}_1 \\
&\geq &\frac{1}{\mo{z^{n}}}\epsi{1}{K}(p,q)\mo{z^{n+n}}
=\epsi{1}{K}(p,q)\mo{z^n} .
\end{eqnarray*}
Then
$$\chi(r(z),\widetilde {r}(z)) ~ \leq~ \sqrt{2}\nor{(p-\widetilde p, q-\widetilde q)}_1\frac{\mo{z}^{n}}{\epsi{1}{K}(p,q)\mo{z}^n}=\sqrt{2} \frac{\nor{(p-\widetilde p, q-\widetilde q)}_1}{\epsi{1}{K}(p,q)}$$
and the result follows.
\end{proof}

\section{Conclusions and topics for future research}

In this paper we have considered the problem of working with rational functions in a numeric environment, with the particular goal of monitoring the absence  of Froissart doublets.  Three different parameters were studied, including the euclidean condition number of underlying Sylvester-type matrices depending on some integer $\ell$, a parameter for determing coprimeness of two numerical polynomials and bounds on the spherical derivative.  Each case these parameters  sharpen those found in \cite{BM}.

Future plans include using our three parameters as penalties for computing rational approximants with the goal of removing such undesirable features. In addition, there are a number of open questions that fall out of our work.  All our results are given using a monomial  basis. It is of interest to see what can be said in the case of other polynomial bases, for example those based on orthogonal polynomials. It is also natural to look for
an interpretation of the quantity  $\nu (r)$ in terms of residues.

\end{document}